\newcommand\supint{
\mathrlap{\phantom{\intop}\mkern-8mu
\overline{\vphantom{\intop}\mkern8mu}}
\mkern-3mu\int}
\newcommand\infint{
\mathrlap{\underline{\vphantom{\intop}\mkern8mu}}
\mkern-3mu\int}
\begin{document}

\title{Some inequalities for interval-valued functions on time scales}

\author{Dafang Zhao$^{1,2}$  
\and Guoju Ye$^{1}$ 
\and  Wei Liu$^{1}$ 
\and Delfim F. M. Torres$^{3}$}

\authorrunning{D. F. Zhao et al.} 

\institute{Dafang Zhao \at
\email{dafangzhao@163.com}
\and
Guoju Ye \at
\email{yegj@hhu.edu.cn}
\and Wei Liu \at
\email{liuw626@hhu.edu.cn}
\and Delfim F. M. Torres \at
\email{delfim@ua.pt}\\
\and
$^{1}$College of Science,
Hohai University, Nanjing, Jiangsu 210098, P. R. China.\\
$^{2}$School of Mathematics and Statistics, Hubei Normal University, 
Huangshi, Hubei 435002, P. R. China.\\
$^{3}$Center for Research and Development
in Mathematics and Applications (CIDMA),\\
Department of Mathematics, University of Aveiro,
3810-193 Aveiro, Portugal.\\}


\date{Submitted: 20-Dec-2017  / Revised: 31-Aug-2018 / Accepted: 11-Sept-2018}

\maketitle


\begin{abstract}
We introduce the interval Darboux delta integral
(shortly, the $ID$ $\Delta$-integral) and the
interval Riemann delta integral (shortly, the $IR$ $\Delta$-integral)
for interval-valued functions on time scales. Fundamental
properties of $ID$ and $IR$ $\Delta$-integrals
and examples are given. Finally, we prove Jensen's,
H\"{o}lder's and Minkowski's inequalities for the $IR$ $\Delta$-integral. 
Also, some examples are given to illustrate our theorems.

\keywords{interval-valued functions \and time scales 
\and Jensen's inequality \and H\"{o}lder's inequality
\and Minkowski's inequality}
\end{abstract}


\section{Introduction}
\label{intro}

Interval analysis was initiated by Moore for providing reliable computations \cite{M66}.
Since then, interval analysis and interval-valued functions have been extensively studied
both in mathematics and its applications: see, e.g.,
\cite{B13,C13,C15,CB,CC,G17,J01,L15,M79,M09,O15,S09,WG00,Z17}.
Rece-\\ntly, several classical integral inequalities have been extended
to the context of interval-valued functions by  Chalco-Cano et al. \cite{C12,CL15},
Costa \cite{C17}, Costa and Rom\'{a}n-Flores \cite{CF}, Flores-Franuli\v{c} et al. \cite{FC},
Rom\'{a}n-Flores et al. \cite{R16,R13}.

Motivated by \cite{C81,C17,R16}, we introduce the $ID$ and $IR$ $\Delta$-integrals,
and present some integral inequalities on time scales.
A time scale $\mathbb{T}$ is an arbitrary nonempty
closed subset of the real numbers $\mathbb{R}$ with the subspace topology
inherited from the standard topology of $\mathbb{R}$. The theory
of time scales was born in 1988 with the Ph.D. thesis of Hilger \cite{H4}.
The aim is to unify various definitions and results from the theories 
of discrete and continuous dynamical systems,
and to extend them to more general classes of dynamical systems.
It has undergone tremendous expansion and development on various aspects by several
authors over the past three decades: see, e.g.,
\cite{BCT1,BCT2,BP1,BP2,FB15,FT16,V16,W05,YZ,YZT,ZL16}.

In 2013, Lupulescu introduced the Riemann $\Delta$-inte-\\gral 
for interval-valued functions on time scales
and presented some of its basic properties \cite{L13}. 
Nonetheless, to our best knowledge,
there is no systematic theory of integration for interval-valued
functions on time scales. In this work, in order to complete the theory
of $IR$ $\Delta$-integration and improve recent results given in \cite{C81,C17,R16},
we introduce the $ID$ $\Delta$-integral and the $IR$ $\Delta$-integral on time scales.
We show that the $ID$ $\Delta$-integral ia a generalization of the $IR$ $\Delta$-integral.
Also, some basic properties for the $ID$ and $IR$ $\Delta$-integrals, and some examples,
are given. Finally, we present Jensen's inequality, H\"{o}lder's inequality
and Minkowski's inequality for the $IR$ $\Delta$-integral. Some celebrated inequalities
are derived as consequences of our results.

The paper is organized as follows. After a Section~\ref{sec:2} of preliminaries,
in Section~\ref{sec:3} the $ID$ and $IR$ $\Delta$-integrals for interval-valued
functions are introduced. Moreover, some basic properties and examples are given.
In Section~\ref{sec:4}, we prove Jensen's, H\"{o}lder's and Minkowski's inequalities
for the general $IR$ $\Delta$-integral. We end with Section~\ref{sec:5}
of conclusions.


\section{Preliminaries}
\label{sec:2}

In this section, we recall some basic definitions, notations, properties
and results on interval analysis and the time scale calculus, which are used throughout the
paper. A real interval $[u]$ is the bounded, closed subset of $\mathbb{R}$ defined by
$$
[u]=[\underline{u},\overline{u}]=\{x\in\mathbb{R}|\ \underline{u}\leq x\leq\overline{u}\},
$$
where $\underline{u}, \overline{u}\in \mathbb{R}$ and $\underline{u}\leq\overline{u}$.
The numbers $\underline{u}$ and $\overline{u}$ are called the left and the right endpoints 
of $[\underline{u},\overline{u}]$, respectively. When $\underline{u}$ and $\overline{u}$ 
are equal, the interval $[u]$ is said to be degenerate.
In this paper, the term interval will mean a nonempty interval.
We call $[u]$ positive if $\underline{u}>0$ or negative if $\overline{u}<0$. 
The partial order ``$\leq$" is defined by
$$
[\underline{u},\overline{u}]\leq[\underline{v},\overline{v}]\Longleftrightarrow \underline{u}\leq\underline{v},\overline{u}\leq\overline{v}.
$$
The inclusion ``$\subseteq$" is defined by
$$
[\underline{u},\overline{u}]\subseteq[\underline{v},\overline{v}]\Longleftrightarrow \underline{v}\leq\underline{u},\overline{u}\leq\overline{v}.
$$
For an arbitrary real number $\lambda$ and $[u]$, the interval $\lambda [u]$ is given by
\begin{equation*} \lambda[\underline{u},\overline{u}]=
\begin{cases}
[\lambda\underline{u},\lambda\overline{u}]& \text{if $\lambda>0$},\\
\{0\}& \text{if $\lambda=0$},\\
[\lambda\overline{u},\lambda\underline{u}]& \text{if $\lambda<0$}.
\end{cases}
\end{equation*}
For $[u]=[\underline{u},\overline{u}]$ and $[v]=[\underline{v},\overline{v}]$,
the four arithmetic operators (+,-,$\cdot$,/) are defined by
$$
[u]+[v]=[\underline{u}+\underline{v},\overline{u}+\overline{v}],
$$
$$
[u]-[v]=[\underline{u}-\overline{v},\overline{u}-\underline{v}],
$$
$$
[u]\cdot[v]=\big[\min\{\underline{u}\underline{v},\underline{u}\overline{v},
\overline{u}\underline{v},\overline{u}\overline{v}\},
\max\{\underline{u}\underline{v},\underline{u}\overline{v},
\overline{u}\underline{v},\overline{u}\overline{v}\}\big],
$$
\begin{equation*}
\begin{split}
[u]/[v]=\big[&\min\{\underline{u}/\underline{v},\underline{u}/\overline{v},
\overline{u}/\underline{v},\overline{u}/\overline{v}\},\\
&\max\{\underline{u}/\underline{v},\underline{u}/\overline{v},
\overline{u}/\underline{v},\overline{u}/\overline{v}\}\big], 
{\rm where}\ \  0\notin[\underline{v},\overline{v}].
\end{split}
\end{equation*}
We denote by $\mathbb{R}_{\mathcal{I}}$ the set of all intervals 
of $\mathbb{R}$, and by $\mathbb{R}^{+}_{\mathcal{I}}$ and 
$\mathbb{R}^{-}_{\mathcal{I}}$ the set of all positive intervals 
and negative intervals of $\mathbb{R}$, respectively. 
The Hausdorff--Pompeiu distance between intervals 
$[\underline{u},\overline{u}]$ and $[\underline{v},\overline{v}]$ is defined by 
$$
d\big([\underline{u},\overline{u}],[\underline{v},\overline{v}]\big)
=\max\Big\{|\underline{u}-\underline{v}|,|\overline{u}-\overline{v}|\Big\}.
$$
It is well known that $(\mathbb{R}_{\mathcal{I}}, d)$ is a complete metric space.

Let $\mathbb{T}$ be a time scale. We define the half-open 
interval $[a,b)_{\mathbb{T}}$ by
$$
[a,b)_{\mathbb{T}}
=\left\{t\in \mathbb{T}: a \leq t < b\right\}.
$$
The open and closed intervals are defined similarly. For $t\in \mathbb{T}$,
we denote by $\sigma$ the forward jump operator, i.e., $\sigma(t):=\inf\{s>t: s\in \mathbb{T}\}$,
and by $\rho$ the backward jump operator, i.e., $\rho(t):=\sup\{s<t: s\in \mathbb{T}\}$.
Here, we put $\sigma(\sup\mathbb{T})=\sup\mathbb{T}$ and $\rho(\inf\mathbb{T})=\inf\mathbb{T}$,
where $\sup\mathbb{T}$ and $\inf\mathbb{T}$ are finite. In this situation,
$\mathbb{T}^{\kappa}:=\mathbb{T}\backslash \{\sup\mathbb{T}\}$
and $\mathbb{T}_{\kappa}:=\mathbb{T}\backslash\{\inf\mathbb{T}\}$,
otherwise, $\mathbb{T}^{\kappa}:=\mathbb{T}$ and $\mathbb{T}_{\kappa}:=\mathbb{T}$.
If $\sigma(t)>t$, then we say that $t$ is right-scattered,
while if $\rho(t)<t$, then we say that $t$ is left-scattered.
If $\sigma(t)=t$ and $t< \sup\mathbb{T}$, then $t$ is called right-dense,
and if $\rho(t)=t$ and $t> \inf\mathbb{T}$, then $t$ is left-dense.
The graininess functions $\mu$ and $\eta$ are defined by $\mu(t):=\sigma(t)-t$
and $\eta(t):=t-\rho(t)$, respectively.

A function $f:[a,b]_{\mathbb{T}}\rightarrow \mathbb{R}$ is called 
right-dense continuous ($rd$-continuous) if it is right continuous at each right-dense
point and there exists a finite left limit at all left-dense points. The set of 
$rd$-continuous function $f:[a,b]_{\mathbb{T}}\rightarrow \mathbb{R}$ 
is denoted by $C_{rd}([a,b]_{\mathbb{T}},\mathbb{R})$.

A function $f$ is said to be an interval function of $t$ on $[a,b]_{\mathbb{T}}$ 
if it assigns a nonempty interval 
$$
f(t)=\big[\underline{f}(t), \overline{f}(t)\big]
$$
to each $t\in[a,b]_{\mathbb{T}}$. We say that 
$f:[a,b]_{\mathbb{T}}\rightarrow \mathbb{R}_{\mathcal{I}}$ 
is continuous at $t_{0}\in[a,b]_{\mathbb{T}}$ if for each 
$\epsilon>0$ there exists a $\delta>0$ such that
$$
d(f(t),f(t_{0}))<\epsilon
$$
whenever $|t-t_{0}|<\delta$. The set of continuous function 
$f:[a,b]_{\mathbb{T}}\rightarrow \mathbb{R}_{\mathcal{I}}$ 
is denoted by $C([a,b]_{\mathbb{T}},\mathbb{R}_{\mathcal{I}})$. 
It is clear that $f$ is continuous at $t_{0}$ if and only if 
$\underline{f}$ and $\overline{f}$ are continuous at $t_{0}$.

A division of $[a,b]_{\mathbb{T}}$ is any finite ordered subset 
$D$ having the form
$$
\mathcal{D}=\{a=t_{0}<t_{1}<\cdots <t_{n}=b\}.
$$  
We denote the set of all divisions of $[a,b]_{\mathbb{T}}$ 
by $\mathcal{D}=\mathcal{D}([a,b]_{\mathbb{T}})$.

\begin{lemma}[Bohner and Peterson \cite{BP2}]
\label{lem1}
For every $\delta>0$ there exists some division 
$D\in\mathcal{D}([a,b]_{\mathbb{T}})$ given by 
$$
a=t_{0}<t_{1}<\cdots <t_{n-1}<t_{n}=b
$$ 
such that for each $i\in\{1,2,\ldots,n\}$ either 
$t_{i}-t_{i-1}\leq\delta$ or 
$$
t_{i}-t_{i-1}>\delta 
\ \ {\rm and}\ \ \rho(t_{i})=t_{i-1}.
$$
\end{lemma}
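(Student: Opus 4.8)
The plan is to argue by a ``last admissible point'' / least-upper-bound argument, exploiting that $\mathbb{T}$ is closed so that suprema of subsets of $[a,b]_{\mathbb{T}}$ stay in $\mathbb{T}$. Fix $\delta>0$ and set
\[
S=\big\{t\in(a,b]_{\mathbb{T}}:\ [a,t]_{\mathbb{T}}\ \text{admits a division with the stated property}\big\}.
\]
First I would verify that $S\neq\emptyset$. If $a$ is right-dense there is $t\in\mathbb{T}$ with $a<t<a+\delta$ that can be taken $\le b$, and $\{a,t\}$ works since $t-a\le\delta$; if $a$ is right-scattered then $\sigma(a)\in(a,b]_{\mathbb{T}}$ and $\{a,\sigma(a)\}$ works because $\rho(\sigma(a))=a$, whatever the size of $\mu(a)$. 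Since $S$ is nonempty and bounded above, $c:=\sup S$ exists, and because $\mathbb{T}$ is closed we have $c\in\mathbb{T}$ with $a<c\le b$.

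The second step is to show $c\in S$, splitting on the type of $c$ as a left-hand point. If $c$ is left-scattered, then no point of $\mathbb{T}$ lies in $(\rho(c),c)$, while $c=\sup S>\rho(c)$ forces some $t\in S$ with $\rho(c)<t\le c$, hence $t=c$ and so $c\in S$. If $c$ is left-dense, then $c=\sup S$ yields $t\in S$ with $c-\delta<t\le c$; if $t=c$ we are done, and otherwise a division of $[a,c]_{\mathbb{T}}$ is obtained by appending $c$ to a division of $[a,t]_{\mathbb{T}}$, which is legitimate because $c-t<\delta$. In all cases $c\in S$.

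The third step is to show $c=b$, by contradiction. Assume $c<b$. If $c$ is right-scattered, then $\sigma(c)\in(c,b]_{\mathbb{T}}$ because $b$ is itself a point of $\mathbb{T}$ exceeding $c$; appending $\sigma(c)$ to a division of $[a,c]_{\mathbb{T}}$ is allowed since $\rho(\sigma(c))=c$, so $\sigma(c)\in S$, contradicting $c=\sup S$. If $c$ is right-dense, pick $t\in\mathbb{T}$ with $c<t<c+\min\{\delta,b-c\}$; then $t\in(c,b]_{\mathbb{T}}$ and $t-c<\delta$, so appending $t$ to a division of $[a,c]_{\mathbb{T}}$ shows $t\in S$, again contradicting $c=\sup S$. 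Hence $c=b$, and together with $c\in S$ this is exactly the asserted division of $[a,b]_{\mathbb{T}}$.

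The work here is essentially bookkeeping; the only place that needs care is the four-way case distinction at $c$. One must use the closedness of $\mathbb{T}$ to land $\sup S$ inside $\mathbb{T}$, the defining formulas $\rho(c)=\sup\{s<c:\,s\in\mathbb{T}\}$ and $\sigma(c)=\inf\{s>c:\,s\in\mathbb{T}\}$ to control the neighbours of $c$ (in particular the identity $\rho(\sigma(c))=c$ when $c$ is right-scattered), and the elementary observation that the newly inserted node never overshoots $b$ since $b\in\mathbb{T}$. No genuine estimate is involved; the whole difficulty is carrying out this exhaustive case analysis correctly and in the right order. This reproduces the standard compactness argument of Bohner and Peterson \cite{BP2}.
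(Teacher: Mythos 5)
Your argument is correct and complete. Note, however, that the paper itself offers no proof of this statement: it is imported verbatim as a cited result of Bohner and Peterson \cite{BP2}, so there is nothing in the source to compare against line by line. Your least-upper-bound argument --- take $S$ to be the set of right endpoints $t$ for which $[a,t]_{\mathbb{T}}$ admits an admissible division, use closedness of $\mathbb{T}$ to place $c=\sup S$ in $\mathbb{T}$, show $c\in S$ by the left-dense/left-scattered dichotomy, and push past $c$ by the right-dense/right-scattered dichotomy --- is precisely the standard ``induction principle on time scales'' proof that appears in the cited monograph, and every case is handled correctly: the nonemptiness of $S$ is justified in both cases at $a$, the identity $\rho(\sigma(c))=c$ for right-scattered $c$ is what licenses the long subinterval, and the choice $t<c+\min\{\delta,\,b-c\}$ keeps the appended node inside $[a,b]_{\mathbb{T}}$. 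In short: the proposal is a correct, self-contained proof of a lemma the paper merely quotes.
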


Let $\mathcal{D}(\delta,[a,b]_{\mathbb{T}})$
be the set of all $D\in\mathcal{D}([a,b]_{\mathbb{T}})$ 
that possess the property indicated in Lemma \ref{lem1}.
In each interval $[t_{i-1},t_{i})_{\mathbb{T}}$, where 
$1\leq i\leq n$, choose an arbitrary point $\xi_{i}$ and form the sum
$$
S(f,D,\delta)=\displaystyle \sum^{n}_{i=1}f(\xi_{i})(t_{i}-t_{i-1}),
$$
where $f:[a,b]_{\mathbb{T}}\rightarrow \mathbb{R}(or \ \mathbb{R}_{\mathcal{I}})$. 
We call $S(f,D,\delta)$ a Riemann $\Delta$-sum of $f$ corresponding 
to $D\in \mathcal{D}(\delta,[a,b]_{\mathbb{T}})$.

\begin{definition}[Bohner and Peterson \cite{BP2}]
\label{defn2.1}
A function $f:[a,b]_{\mathbb{T}}\rightarrow \mathbb{R}$
is called Riemann $\Delta$-integrable on $[a,b]_{\mathbb{T}}$
if there exists an $A\in \mathbb{R}$ such that for each
$\epsilon>0$ there exists a $\delta>0$ for which
$$\big|S(f,\mathcal{D},\delta), A\big|<\epsilon$$
for all $D\in \mathcal{D}(\delta,[a,b]_{\mathbb{T}})$. 
In this case, $A$ is called the Riemann $\Delta$-integral of 
$f$ on $[a,b]_{\mathbb{T}}$ and is denoted by $A=(R)\int_{a}^{b}f(t)\Delta t$ 
or $A=\int_{a}^{b}f(t)\Delta t$. The family of all Riemann $\Delta$-integrable 
functions on $[a,b]_{\mathbb{T}}$ is denoted by $\mathcal{R}_{(\Delta,\ [a,b]_{\mathbb{T}})}$.
\end{definition}


\section{The Interval Darboux and Riemann delta integrals}
\label{sec:3}

Let $f:[a,b]_{\mathbb{T}}\rightarrow \mathbb{R}_{\mathcal{I}}$ be such 
that $f(t)=\big[\underline{f}(t), \overline{f}(t)\big]$ 
for all $t\in [a,b]_{\mathbb{T}}$. We denote
$$
M=\sup\{\overline{f}(t):t\in [a,b)_{\mathbb{T}}\},\ \ m
=\inf\{\underline{f}(t):t\in [a,b)_{\mathbb{T}}\},
$$
and for $1\leq i\leq n$,
$$
M_{i}=\sup\{\overline{f}(t):t\in [t_{i-1},t_{i})_{\mathbb{T}}\},
$$
$$ 
m_{i}=\inf\{\underline{f}(t):t\in [t_{i-1},t_{i})_{\mathbb{T}}\}.
$$
The lower Darboux $\Delta$-sum $L(\underline{f},D)$ of $\underline{f}$
with respect to $D\in\mathcal{D}([a,b]_{\mathbb{T}})$ is the sum
$$
L(\underline{f},D)=\sum_{i=1}^{n}m_{i}(t_{i}-t_{i-1}),
$$
and the upper Darboux $\Delta$-sum $U(\overline{f},D)$ is
$$
U(\overline{f},D)=\sum_{i=1}^{n}M_{i}(t_{i}-t_{i-1}).
$$

\begin{definition}[The Interval Darboux delta integral]
\label{defn3}
Let $I=[a,b]_{\mathbb{T}}$, where $a,b \in \mathbb{T}$. 
The lower Darboux $\Delta$-integral of $\underline{f}$ 
on $[a,b]_{\mathbb{T}}$ is defined by
$$
(D)\infint_{a}^{b}\underline{f}(t)\Delta t
=\sup_{D\in \mathcal{D}([a,b]_{\mathbb{T}})}\Big\{L(\underline{f},D)\Big\}
$$
and the upper Darboux $\Delta$-integral of $\overline{f}$ 
on $[a,b]_{\mathbb{T}}$ is defined by
$$
(D)\supint_{a}^{b}\overline{f}(t)\Delta t
=\inf_{D\in \mathcal{D}([a,b]_{\mathbb{T}})}\Big\{U(\overline{f},D)\Big\}.
$$
Then, we define the $ID$ $\Delta$-integral of
$f:[a,b]_{\mathbb{T}}\rightarrow \mathbb{R}_{\mathcal{I}}$ 
on $[a,b]_{\mathbb{T}}$ as the interval
$$
(ID)\int_{a}^{b}f(t)\Delta t=\Bigg[(D)\infint_{a}^{b}\underline{f}(t)\Delta t, 
(D)\supint_{a}^{b}\overline{f}(t)\Delta t\Bigg].
$$
The family of all $ID$ $\Delta$-integrable functions 
on $[a,b]_{\mathbb{T}}$ is denoted by 
$\mathcal{ID}_{(\Delta,\ [a,b]_{\mathbb{T}})}$.
\end{definition}

\begin{theorem}
\label{thm2}
Let $f:[a,b]_{\mathbb{T}}\rightarrow \mathbb{R}_{\mathcal{I}}$ 
be such that 
$$
f(t)=\big[\underline{f}(t), \overline{f}(t)\big]
$$
for all $t\in [a,b]_{\mathbb{T}}$. Then 
$f\in \mathcal{ID}_{(\Delta,\ [t,\sigma(t)]_{\mathbb{T}})}$ and
$$
(ID)\int_{t}^{\sigma(t)}f(s)\Delta s=\big[\mu(t)\underline{f}(t),\mu(t)\overline{f}(t)\big].
$$
\end{theorem}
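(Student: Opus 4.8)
The plan is to split the argument according to the nature of the point $t$: either $t$ is right-dense, in which case $\sigma(t)=t$, or $t$ is right-scattered, in which case $\sigma(t)>t$ and, by the very definition of the forward jump operator, no point of $\mathbb{T}$ lies in the open interval $(t,\sigma(t))$. In both cases the time-scale interval $[t,\sigma(t)]_{\mathbb{T}}$ collapses to a very small set, so that the Darboux $\Delta$-sums can be enumerated explicitly.

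If $t$ is right-dense, then $[t,\sigma(t)]_{\mathbb{T}}=\{t\}$ is degenerate and $\mu(t)=\sigma(t)-t=0$; the only division of a one-point set is the trivial one, so every lower and upper Darboux $\Delta$-sum is an empty sum, hence both Darboux $\Delta$-integrals equal $0$ and $(ID)\int_t^{\sigma(t)}f(s)\Delta s=\{0\}=[\mu(t)\underline{f}(t),\mu(t)\overline{f}(t)]$. If instead $t$ is right-scattered, then $[t,\sigma(t)]_{\mathbb{T}}=\{t,\sigma(t)\}$, so $\mathcal{D}([t,\sigma(t)]_{\mathbb{T}})$ contains exactly one division, namely $D=\{t=t_{0}<t_{1}=\sigma(t)\}$ with $n=1$. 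For this $D$ the only relevant subinterval is $[t_{0},t_{1})_{\mathbb{T}}=[t,\sigma(t))_{\mathbb{T}}=\{t\}$, whence $m_{1}=\underline{f}(t)$, $M_{1}=\overline{f}(t)$, and $t_{1}-t_{0}=\mu(t)$. Thus $L(\underline{f},D)=\mu(t)\underline{f}(t)$ and $U(\overline{f},D)=\mu(t)\overline{f}(t)$, and since these are the only Darboux $\Delta$-sums available, passing to the supremum and the infimum over $\mathcal{D}([t,\sigma(t)]_{\mathbb{T}})$ gives
$$
(D)\infint_{t}^{\sigma(t)}\underline{f}(s)\Delta s=\mu(t)\underline{f}(t)
\quad\text{and}\quad
(D)\supint_{t}^{\sigma(t)}\overline{f}(s)\Delta s=\mu(t)\overline{f}(t).
$$

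To finish, I would observe that $\mu(t)\ge 0$ together with $\underline{f}(t)\le\overline{f}(t)$ forces $\mu(t)\underline{f}(t)\le\mu(t)\overline{f}(t)$, so the two numbers above are genuine interval endpoints; by Definition \ref{defn3} the resulting interval is exactly $(ID)\int_t^{\sigma(t)}f(s)\Delta s$, and in particular $f\in\mathcal{ID}_{(\Delta,\ [t,\sigma(t)]_{\mathbb{T}})}$. I do not anticipate any real difficulty here: the computation is essentially forced once one notes that $[t,\sigma(t)]_{\mathbb{T}}$ reduces to a two-point (or one-point) set, so that the sup/inf defining the Darboux $\Delta$-integrals range over a single value. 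The only point requiring a little care is reconciling the degenerate case with Definition \ref{defn3}, i.e.\ verifying that the $ID$ $\Delta$-integral over a one-point time-scale interval is $\{0\}$ under the convention adopted for divisions.
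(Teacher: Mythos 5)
Your proof is correct and follows essentially the same route as the paper: both arguments reduce to observing that $\mathcal{D}([t,\sigma(t)]_{\mathbb{T}})$ contains a single division $t=t_0<t_1=\sigma(t)$ in the right-scattered case, that $[t,\sigma(t))_{\mathbb{T}}=\{t\}$ forces $m_1=\underline{f}(t)$ and $M_1=\overline{f}(t)$, and that the degenerate case $\sigma(t)=t$ gives $\{0\}$ trivially. Your write-up is in fact slightly more careful than the paper's (which dismisses the right-dense case as ``obvious'' and omits the remark that $\mu(t)\underline{f}(t)\le\mu(t)\overline{f}(t)$), but the underlying argument is identical.
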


\begin{proof}
If $\sigma(t)=t$, then the result is obvious. If $\sigma(t)>t$, 
then $\mathcal{D}([a,b]_{\mathbb{T}})$ only contains 
one single element given by
$$
t=s_{0}<s_{1}=\sigma(t).
$$
Since $[s_{0},s_{1})=[t,\sigma(t))=\{t\}$, we have
$$
L(f,D)=\underline{f}(t)(\sigma(t)-t)=\mu(t)\underline{f}(t),
$$
$$
U(f,D)=\overline{f}(t)(\sigma(t)-t)=\mu(t)\overline{f}(t).
$$
Consequently, we obtain
$$
(ID)\int_{t}^{\sigma(t)} f(s)\Delta s
=\big[\mu(t)\underline{f}(t),\mu(t)\overline{f}(t)\big].
$$
The result is proved.
\end{proof}

\begin{remark}
\label{rmk3.1}
It is clear that if $f$ is a real-valued function,
then our Definition~\ref{defn3} implies the definition of Darboux $\Delta$-integral
introduced by \cite{BP2}. We also have the following:

\noindent(1) If $\mathbb{T}=\mathbb{R}$, then Definition~\ref{defn3}
implies the definition of Darboux interval integral 
introduced by  Caprani et al. \cite{C81,R82}.

\noindent(2) If $\mathbb{T}=\mathbb{Z}$, then each function 
$f:\mathbb{Z}\rightarrow \mathbb{R}_{\mathcal{I}}$ is 
$ID$ $\Delta$-integrable on $[a,b]_{\mathbb{T}}$. Moreover,
$$
(ID)\int_{a}^{b}f(t)\Delta t=\Bigg[\sum_{t=a}^{b-1} 
\underline{f}(t), \sum_{t=a}^{b-1}\overline{f}(t)\Bigg].
$$

\noindent(3) If $\mathbb{T}=h\mathbb{Z}$, then each function 
$f:h\mathbb{Z}\rightarrow \mathbb{R}_{\mathcal{I}}$ 
is $ID$ $\Delta$-integrable on $[a,b]_{\mathbb{T}}$. Moreover,
$$
(ID)\int_{a}^{b}f(t)\Delta t=\Bigg[
\sum_{k=\frac{a}{h}}^{\frac{b}{h}-1}\underline{f}(kh)h, 
\sum_{k=\frac{a}{h}}^{\frac{b}{h}-1}\overline{f}(kh)h\Bigg].
$$
\end{remark}

\begin{example}
\label{ex1}
Suppose that $[a,b]_{\mathbb{T}}=[0,1]$, $\mathbb{Q}$ is the set 
of rational numbers in $[0,1]$, and 
$f:[a,b]_{\mathbb{T}}\rightarrow \mathbb{R}_{\mathcal{I}}$ is defined by
\begin{equation*}
f(t)=
\begin{cases}
[-1,0],
& \text{if $t\in \mathbb{Q}$},\\
[1,2],
& \text{if $t\in[0,1]\backslash \mathbb{Q}$}.
\end{cases}
\end{equation*}
Then,
\begin{equation*}
\begin{split}
(ID)\int_{0}^{1}f(t)\Delta t&=\Bigg[(D)\infint_{0}^{1}
\underline{f}(t)dt, (D)\supint_{0}^{1}\overline{f}(t)dt\Bigg]\\
&=[-1,2].
\end{split}
\end{equation*}
\end{example}

\begin{example}
\label{ex2}
Suppose that $[a,b]_{\mathbb{T}}=\big\{0,\frac{1}{3},\frac{1}{2},1\big\}$ and \\
$f:[a,b]_{\mathbb{T}}\rightarrow \mathbb{R}_{\mathcal{I}}$ is defined by
\begin{equation*}
f(t)=
\begin{cases}
[-1,0], & \text{if $t=0$},\\
[-\frac{1}{3},\frac{1}{3}], & \text{if $t=\frac{1}{3}$},\\
[-\frac{1}{2},\frac{1}{2}], & \text{if $t=\frac{1}{2}$},\\
[1,2], & \text{if $t=1$}.
\end{cases}
\end{equation*}
Then,
\begin{equation*}
\begin{split}
(D)\infint_{0}^{1}\underline{f}(t)\Delta t
&=(-1)\cdot\frac{1}{3}+\left(-\frac{1}{3}\right)\cdot\frac{1}{6}
+\left(-\frac{1}{2}\right)\cdot\frac{1}{2}\\
&=-\frac{23}{36},
\end{split}
\end{equation*}
\begin{equation*}
(D)\supint_{0}^{1}\overline{f}(t)\Delta t=0\cdot\frac{1}{3}
+\frac{1}{3}\cdot\frac{1}{6}+\frac{1}{2}\cdot\frac{1}{2}=\frac{11}{36},
\end{equation*}
and therefore
$$
(ID)\int_{0}^{1}f(t)\Delta t=\Bigg[-\frac{23}{36}, \frac{11}{36}\Bigg].
$$
\end{example}

\begin{theorem}
\label{thm3}
Let $f,\ g\in \mathcal{ID}_{(\Delta,\ [a,b]_{\mathbb{T}})}$,
and $\lambda$ be \\an arbitrary real number. Then,

\noindent(1) $\lambda f \in \mathcal{ID}_{(\Delta,\ [a,b]_{\mathbb{T}})}$ and
$$
(ID)\int_{a}^{b}\lambda f(t)\Delta t=\lambda (ID)\int_{a}^{b}f(t)\Delta t;
$$

\noindent(2) $f+g \in \mathcal{ID}_{(\Delta,\ [a,b]_{\mathbb{T}})}$ and
\begin{equation*}
\begin{split}
&(ID)\int_{a}^{b}(f(t)+g(t))\Delta t\\
&\subseteq(ID)\int_{a}^{b}f(t)\Delta t+(ID)\int_{a}^{b}g(t)\Delta t;
\end{split}
\end{equation*}

\noindent(3) for $c\in [a,b]_{\mathbb{T}}$ and $a<c<b$,
$$(ID)\int_{a}^{c}f(t)\Delta t
+(ID)\int_{c}^{b} f(t)\Delta t=(ID)\int_{a}^{b} f(t)\Delta t;
$$

\noindent(4) if $f\subseteq g$ on $[a,b]_{\mathbb{T}}$, then
$$
(ID)\int_{a}^{b} f(t)\Delta t\subseteq (ID)\int_{a}^{b} g(t)\Delta t.
$$
\end{theorem}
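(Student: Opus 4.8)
The plan is to peel each of the four assertions down to the already-known behaviour of the real lower and upper Darboux $\Delta$-integrals, using that, by Remark~\ref{rmk3.1}, these coincide with the Darboux $\Delta$-integrals of Bohner and Peterson~\cite{BP2}, for which linearity, additivity over a splitting point and monotonicity are standard. Concretely I would record three facts about a bounded real function $h$ on $[a,b]_{\mathbb{T}}$: (i) the lower integral is superadditive and the upper integral is subadditive for sums, and each is additive over a splitting point, i.e.\ $\infint_a^b h=\infint_a^c h+\infint_c^b h$ and likewise for $\supint$, whenever $c\in[a,b]_{\mathbb{T}}$; (ii) $\infint_a^b(\lambda h)=\lambda\infint_a^b h$ and $\supint_a^b(\lambda h)=\lambda\supint_a^b h$ for $\lambda>0$, whereas for $\lambda<0$ one has $\infint_a^b(\lambda h)=\lambda\supint_a^b h$ and $\supint_a^b(\lambda h)=\lambda\infint_a^b h$, because $\inf(\lambda E)=\lambda\sup E$ when $\lambda<0$; (iii) both integrals respect the pointwise order. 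In each part I would first note that the interval function produced by the operation is again bounded, hence lies in $\mathcal{ID}_{(\Delta,\,[a,b]_{\mathbb{T}})}$, and then compute the two endpoints separately from Definition~\ref{defn3}.

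For (1), if $\lambda>0$ then $\lambda f(t)=[\lambda\underline{f}(t),\lambda\overline{f}(t)]$, so the endpoints of $(ID)\int_a^b\lambda f$ are $\lambda\infint_a^b\underline{f}$ and $\lambda\supint_a^b\overline{f}$ by (ii), giving $\lambda(ID)\int_a^b f$; if $\lambda<0$ then $\lambda f(t)=[\lambda\overline{f}(t),\lambda\underline{f}(t)]$, so the endpoints are $\infint_a^b(\lambda\overline{f})=\lambda\supint_a^b\overline{f}$ and $\supint_a^b(\lambda\underline{f})=\lambda\infint_a^b\underline{f}$, and these assemble into $\lambda[\infint_a^b\underline{f},\supint_a^b\overline{f}]$ precisely because of the definition of $\lambda[u]$ for $\lambda<0$; the case $\lambda=0$ is trivial. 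For (4), $f\subseteq g$ says $\underline{g}\le\underline{f}$ and $\overline{f}\le\overline{g}$ pointwise, so (iii) yields $\infint_a^b\underline{g}\le\infint_a^b\underline{f}$ and $\supint_a^b\overline{f}\le\supint_a^b\overline{g}$, which is exactly $(ID)\int_a^b f\subseteq(ID)\int_a^b g$.

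For (2), note $\underline{f+g}=\underline{f}+\underline{g}$ and $\overline{f+g}=\overline{f}+\overline{g}$; by (i) the left endpoint of $(ID)\int_a^b(f+g)$ is $\ge\infint_a^b\underline{f}+\infint_a^b\underline{g}$ and the right endpoint is $\le\supint_a^b\overline{f}+\supint_a^b\overline{g}$, and since the right-hand side of the claimed inclusion is the interval $[\infint_a^b\underline{f}+\infint_a^b\underline{g},\,\supint_a^b\overline{f}+\supint_a^b\overline{g}]$, the two endpoint inequalities are precisely the inclusion. For (3), apply the additivity part of (i) to $\underline{f}$ and to $\overline{f}$ separately across $c$, then recombine the two resulting endpoint identities into a single interval identity; here the hypothesis $c\in[a,b]_{\mathbb{T}}$ guarantees that $[a,c]_{\mathbb{T}}$ and $[c,b]_{\mathbb{T}}$ are legitimate time-scale intervals, so that every division of $[a,b]_{\mathbb{T}}$ refines to one through $c$. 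The one point needing real care is the $\lambda<0$ case of (1): one must keep straight that negation turns a lower Darboux $\Delta$-integral into an upper one and vice versa, and that this swap is exactly matched by the endpoint reversal in $\lambda[u]$; everything else is a transcription of the real-valued theory. I would also remark that (2) is genuinely only an inclusion — equality fails already for functions like the one in Example~\ref{ex1} — because the lower integral is merely superadditive and the upper integral merely subadditive.
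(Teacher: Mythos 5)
Your proposal is correct and follows essentially the same route as the paper: part (2) is handled by exactly the same superadditivity-of-$\inf$/subadditivity-of-$\sup$ argument on Darboux $\Delta$-sums that the paper gives, and parts (1), (3), (4) are reduced, as the paper implicitly intends when it calls them obvious, to the standard homogeneity, splitting-point additivity and monotonicity of the real lower and upper Darboux $\Delta$-integrals applied to $\underline{f}$ and $\overline{f}$. Your extra care with the endpoint swap when $\lambda<0$ is a detail the paper omits but is exactly right.
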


\begin{proof}
We only prove that part (2) of Theorem~\ref{thm3} holds. 
The other relations are obvious. Suppose that
$$
f(t)=\big[\underline{f}(t),\overline{f}(t)\big], \ g(t)
=\big[\underline{g}(t),\overline{g}(t)\big].
$$
Select any division $D\in\mathcal{D}([a,b]_{\mathbb{T}})$ 
having the form
$$
D=\{a=t_{0}<t_{1}<\cdots <t_{n}=b\}.
$$
Then,
\begin{equation*}
\begin{split}
&\inf_{t\in [t_{i-1},t_{i})_{\mathbb{T}}}\{\underline{f}(t)\}
+\inf_{t\in [t_{i-1},t_{i})_{\mathbb{T}}}\{\underline{g}(t)\}\\
&\leq \inf_{t\in [t_{i-1},t_{i})_{\mathbb{T}}}\{\underline{f}(t)
+\underline{g}(t)\},
\end{split}
\end{equation*}
\begin{equation*}
\begin{split}
&\sup_{t\in [t_{i-1},t_{i})_{\mathbb{T}}}\{\overline{f}(t)+\overline{g}(t)\}\\
&\leq\sup_{t\in [t_{i-1},t_{i})_{\mathbb{T}}}\{\overline{f}(t)\}
+\sup_{t\in [t_{i-1},t_{i})_{\mathbb{T}}}\{\overline{g}(t)\},
 \end{split}
\end{equation*}
and it follows that
$$
L(\underline{f},D)+L(\underline{g},D)\leq L(\underline{f}+\underline{g},D),
$$
$$
U(\underline{f},D)+U(\underline{g},D)\geq U(\underline{f}+\underline{g},D).
$$
The intended result follows.
\end{proof}

\begin{example}
\label{ex3}
Suppose that $[a,b]_{\mathbb{T}}=[0,1]$, $\mathbb{Q}$ is the set 
of rational numbers in $[0,1]$, and $f,g:[a,b]_{\mathbb{T}}
\rightarrow \mathbb{R}_{\mathcal{I}}$ are defined by
\begin{equation*}
f(t)=
\begin{cases}
[-1,0],
& \text{if $t\in \mathbb{Q}$},\\
[1,2],
& \text{if $t\in[0,1]\backslash \mathbb{Q}$},
\end{cases}
\end{equation*}
\begin{equation*}
g(t)=
\begin{cases}
[0,1],
& \text{if $t\in \mathbb{Q}$},\\
[-2,-1],
& \text{if $t\in[0,1]\backslash \mathbb{Q}$}.
\end{cases}
\end{equation*}
Then
$$
f(t)+g(t)=[-1,1]
$$
for all $t\in[0,1]$. It follows that
\begin{equation*}
\begin{split}
&(ID)\int_{0}^{1}f(t)\Delta t+(ID)\int_{0}^{1}g(t)\Delta t\\
&=[-1,2]+[-2,1]\\
&=[-3,3],
\end{split}
\end{equation*}
\begin{equation*}
 (ID)\int_{0}^{1}(f(t)+g(t))\Delta t =[-1,1].
\end{equation*}
Therefore, we have
\begin{equation*}
\begin{split}
&(ID)\int_{a}^{b}(f(t)+g(t))\Delta t\\
&\subseteq (ID)\int_{a}^{b}f(t)\Delta t+(ID)\int_{a}^{b}g(t)\Delta t.
\end{split}
\end{equation*}
\end{example}

We now give Riemann's definition of integrability, which is equivalent to the
Riemann $\Delta$-integral given in \cite[Definition 13]{L13}.

\begin{definition}[The Interval Riemann delta integral]
\label{defn4}
A function $f:[a,b]_{\mathbb{T}}\rightarrow \mathbb{R}_{\mathcal{I}}$ 
is called $IR$ $\Delta$-integrable on $[a,b]_{\mathbb{T}}$ if there exists 
an $A\in \mathbb{R}_{\mathcal{I}}$ such that for each
$\epsilon>0$ there exists a $\delta>0$ for which
$$d\big(S(f,\mathcal{D},\delta), A\big)<\epsilon$$
for all $D\in \mathcal{D}(\delta,[a,b]_{\mathbb{T}})$. In this case, 
$A$ is called the $IR$ $\Delta$-integral of $f$ on $[a,b]_{\mathbb{T}}$ 
and is denoted by $A=(IR)\int_{a}^{b}f(t)\Delta t$. 
The family of all $IR$ $\Delta$-integrable functions 
on $[a,b]_{\mathbb{T}}$ is denoted by $\mathcal{IR}_{(\Delta,\ [a,b]_{\mathbb{T}})}$.
\end{definition}

\begin{remark} 
\label{rmk3.2}
Definitions \ref{defn3} and \ref{defn4} are not equivalent.
If $f\in\mathcal{IR}_{(\Delta,\ [a,b]_{\mathbb{T}})}$, then 
$f\in\mathcal{ID}_{(\Delta,\ [a,b]_{\mathbb{T}})}$. However, 
the converse is not always true (see Example \ref{ex1}). 
It is clear that $f\in\mathcal{ID}_{(\Delta,\ [a,b]_{\mathbb{T}})}$, 
but $f\notin\mathcal{IR}_{(\Delta,\ [a,b]_{\mathbb{T}})}$. In fact, 
all bounded interval functions are $ID$ $\Delta$-integrable, 
but boundedness of $f$ is not a sufficient condition for 
$IR$ $\Delta$-integrability. If $f$ is a continuous function, 
then $f\in\mathcal{IR}_{(\Delta,\ [a,b]_{\mathbb{T}})}$ 
if and only if $f\in\mathcal{ID}_{(\Delta,\ [a,b]_{\mathbb{T}})}$, 
in which case the value of the integrals agree.
\end{remark}

The following two theorems can be easily verified and so the proofs are omitted.

\begin{theorem}
\label{thm4}
If $f\in C([a,b]_{\mathbb{T}},\mathbb{R}_{\mathcal{I}})$, 
then $f\in\mathcal{IR}_{(\Delta,\ [a,b]_{\mathbb{T}})}$ and 
$$
(IR)\int_{a}^{b}f(t)\Delta t=\Bigg[\int_{a}^{b}
\underline{f}(t)\Delta t,\int_{a}^{b}\overline{f}(t)\Delta t\Bigg].
$$
\end{theorem}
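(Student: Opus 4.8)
The plan is to reduce the statement about the interval-valued function $f$ to the corresponding classical facts about its two real-valued component functions $\underline{f}$ and $\overline{f}$. Since $f\in C([a,b]_{\mathbb{T}},\mathbb{R}_{\mathcal{I}})$, the remark following Definition~\ref{defn2.1}'s surroundings (specifically the observation that $f$ is continuous at $t_0$ if and only if $\underline{f}$ and $\overline{f}$ are continuous at $t_0$) shows that $\underline{f},\overline{f}\in C_{rd}([a,b]_{\mathbb{T}},\mathbb{R})$. By the classical theory of the Riemann $\Delta$-integral on time scales (Definition~\ref{defn2.1} together with the standard result that $rd$-continuous functions are Riemann $\Delta$-integrable, as in \cite{BP2}), both $\int_a^b\underline{f}(t)\Delta t$ and $\int_a^b\overline{f}(t)\Delta t$ exist as real numbers. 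Set $A=\bigl[\int_a^b\underline{f}(t)\Delta t,\ \int_a^b\overline{f}(t)\Delta t\bigr]$; this is a genuine element of $\mathbb{R}_{\mathcal{I}}$ because $\underline{f}(t)\le\overline{f}(t)$ pointwise forces the lower integral to be $\le$ the upper one.

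Next I would verify that $A$ satisfies the defining property in Definition~\ref{defn4}. Fix $\epsilon>0$. By Riemann $\Delta$-integrability of $\underline{f}$ and $\overline{f}$ there is a $\delta>0$ such that for every $D\in\mathcal{D}(\delta,[a,b]_{\mathbb{T}})$ and every choice of tags, $\bigl|S(\underline{f},D,\delta)-\int_a^b\underline{f}(t)\Delta t\bigr|<\epsilon$ and $\bigl|S(\overline{f},D,\delta)-\int_a^b\overline{f}(t)\Delta t\bigr|<\epsilon$ (take the smaller of the two $\delta$'s). The key algebraic point is that for the tagged sum of the interval-valued $f$ we have, by the definition of interval addition and positive scalar multiplication (the increments $t_i-t_{i-1}$ are positive),
$$
S(f,D,\delta)=\sum_{i=1}^{n}f(\xi_i)(t_i-t_{i-1})=\Bigl[\sum_{i=1}^{n}\underline{f}(\xi_i)(t_i-t_{i-1}),\ \sum_{i=1}^{n}\overline{f}(\xi_i)(t_i-t_{i-1})\Bigr]=\bigl[S(\underline{f},D,\delta),\ S(\overline{f},D,\delta)\bigr].
$$
Therefore, by the definition of the Hausdorff--Pompeiu distance,
$$
d\bigl(S(f,D,\delta),A\bigr)=\max\Bigl\{\bigl|S(\underline{f},D,\delta)-\textstyle\int_a^b\underline{f}(t)\Delta t\bigr|,\ \bigl|S(\overline{f},D,\delta)-\int_a^b\overline{f}(t)\Delta t\bigr|\Bigr\}<\epsilon,
$$
which is exactly the condition required of $A$. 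Hence $f\in\mathcal{IR}_{(\Delta,\ [a,b]_{\mathbb{T}})}$ and $(IR)\int_a^bf(t)\Delta t=A$, giving the claimed formula.

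The only slightly delicate point — and the one I would be careful about — is the componentwise identity for the Riemann $\Delta$-sum: it relies on the fact that the partition increments $t_i-t_{i-1}$ are strictly positive, so that $f(\xi_i)(t_i-t_{i-1})$ really is $[\underline{f}(\xi_i)(t_i-t_{i-1}),\overline{f}(\xi_i)(t_i-t_{i-1})]$ rather than a flipped interval, and then that finite sums of intervals add endpoint-by-endpoint. Both are immediate from the arithmetic conventions laid out in Section~\ref{sec:2}. A secondary point worth a sentence is that one should invoke (or briefly recall) the classical time-scale result that $rd$-continuity implies Riemann $\Delta$-integrability; given that the paper already cites \cite{BP2} for the scalar theory, this can be taken as known. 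Since the problem statement even notes that the proof is omitted as ``easily verified,'' a short paragraph containing the displayed componentwise identity and the distance estimate above suffices.
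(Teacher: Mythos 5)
Your proof is correct and is precisely the standard componentwise argument the authors had in mind: the paper itself omits the proof of this theorem as ``easily verified,'' and your reduction to the rd-continuity of $\underline{f}$ and $\overline{f}$, the identity $S(f,D,\delta)=\bigl[S(\underline{f},D,\delta),S(\overline{f},D,\delta)\bigr]$ (valid because the increments $t_i-t_{i-1}$ are positive), and the fact that the Hausdorff--Pompeiu distance is the maximum of the endpoint distances supply exactly the missing details. No gaps.
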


\begin{theorem}
\label{thm5}
Let $f,\ g\in \mathcal{IR}_{(\Delta,\ [a,b]_{\mathbb{T}})}$, 
and $\lambda$ be an arbitrary real number. Then,

\noindent(1) $\lambda f \in \mathcal{IR}_{(\Delta,\ [a,b]_{\mathbb{T}})}$ and
$$
(IR)\int_{a}^{b}\lambda f(t)\Delta t=\lambda (IR)\int_{a}^{b}f(t)\Delta t;
$$

\noindent(2) $f+g \in \mathcal{IR}_{(\Delta,\ [a,b]_{\mathbb{T}})}$ and
\begin{equation*}
\begin{split}
&(IR)\int_{a}^{b}(f(t)+g(t))\Delta t\\
&=(IR)\int_{a}^{b}f(t)\Delta t+(IR)\int_{a}^{b}g(t)\Delta t;
\end{split}
\end{equation*}

\noindent(3) for $c\in [a,b]_{\mathbb{T}}$ and $a<c<b$,
$$
(IR)\int_{a}^{c}f(t)\Delta t+(IR)\int_{c}^{b} f(t)\Delta t
=(IR)\int_{a}^{b} f(t)\Delta t;
$$

\noindent(4) if $f\subseteq g$ on $[a,b]_{\mathbb{T}}$, then
$$
(IR)\int_{a}^{b} f(t)\Delta t\subseteq (IR)\int_{a}^{b} g(t)\Delta t.
$$
\end{theorem}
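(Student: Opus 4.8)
The plan is to reduce everything to the real-valued Riemann $\Delta$-integral by working at the level of endpoints. The key observation is that in any division $D=\{a=t_{0}<\cdots<t_{n}=b\}$ every factor $t_{i}-t_{i-1}$ is strictly positive, so for $f(t)=[\underline{f}(t),\overline{f}(t)]$ the interval arithmetic gives
$$
S(f,D,\delta)=\Bigl[\,S(\underline{f},D,\delta),\ S(\overline{f},D,\delta)\,\Bigr].
$$
Since $d$-convergence of intervals is exactly convergence of both endpoints, this identity yields: $f\in\mathcal{IR}_{(\Delta,\,[a,b]_{\mathbb{T}})}$ if and only if $\underline{f},\overline{f}\in\mathcal{R}_{(\Delta,\,[a,b]_{\mathbb{T}})}$, in which case
$(IR)\int_{a}^{b}f(t)\Delta t=\bigl[\int_{a}^{b}\underline{f}(t)\Delta t,\ \int_{a}^{b}\overline{f}(t)\Delta t\bigr]$ (this extends Theorem~\ref{thm4} from the continuous case to all $IR$ $\Delta$-integrable $f$ and is the workhorse of the argument). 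With this decomposition in hand, each of (1)--(4) follows by invoking the corresponding property of the scalar Riemann $\Delta$-integral (cf. Bohner and Peterson \cite{BP2}) for $\underline{f}$ and $\overline{f}$ separately and then reassembling the interval.

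Concretely: for (1), if $\lambda\geq 0$ then $\underline{\lambda f}=\lambda\underline{f}$ and $\overline{\lambda f}=\lambda\overline{f}$, so the scalar linearity gives $[\lambda\int\underline{f},\lambda\int\overline{f}]=\lambda\,(IR)\int f$; if $\lambda<0$ then $\underline{\lambda f}=\lambda\overline{f}$ and $\overline{\lambda f}=\lambda\underline{f}$, and the same bookkeeping gives $[\lambda\int\overline{f},\lambda\int\underline{f}]$, which is $\lambda[\int\underline{f},\int\overline{f}]$ by the definition of $\lambda[\underline{u},\overline{u}]$ for negative $\lambda$. For (2), $\underline{f+g}=\underline{f}+\underline{g}$ and $\overline{f+g}=\overline{f}+\overline{g}$, and scalar linearity produces \emph{equality} $(IR)\int(f+g)=(IR)\int f+(IR)\int g$; the improvement over the mere inclusion in Theorem~\ref{thm3}(2) comes precisely from the fact that Riemann $\Delta$-sums are additive exactly, $S(f+g,D,\delta)=S(f,D,\delta)+S(g,D,\delta)$, whereas lower/upper Darboux sums are only super-/sub-additive. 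For (3), additivity of the scalar Riemann $\Delta$-integral over $[a,c]_{\mathbb{T}}$ and $[c,b]_{\mathbb{T}}$, applied to $\underline{f}$ and $\overline{f}$, gives the claim. For (4), $f\subseteq g$ means $\underline{g}\leq\underline{f}$ and $\overline{f}\leq\overline{g}$ pointwise, so monotonicity of the scalar integral gives $\int\underline{g}\leq\int\underline{f}$ and $\int\overline{f}\leq\int\overline{g}$, i.e.\ $(IR)\int f\subseteq(IR)\int g$. (Alternatively, one can argue purely metrically: $S(\lambda f,D,\delta)=\lambda S(f,D,\delta)$ with $d(\lambda x,\lambda y)=|\lambda|\,d(x,y)$ gives (1); $S(f+g,D,\delta)=S(f,D,\delta)+S(g,D,\delta)$ with subadditivity of $d$ under Minkowski sums gives (2); and $S(f,D,\delta)\subseteq S(g,D,\delta)$ together with closedness of $\subseteq$ under $d$-limits gives (4). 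These are the same proof, merely repackaged.)

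The only step requiring genuine care is (3): one must know that a $\delta$-fine division of $[a,b]_{\mathbb{T}}$ in the sense of Lemma~\ref{lem1} can always be chosen to have $c$ as a node, and that its restrictions to $[a,c]_{\mathbb{T}}$ and $[c,b]_{\mathbb{T}}$ are again $\delta$-fine. Here the structure of Lemma~\ref{lem1} is essential: if $c$ lay strictly inside a subinterval $[t_{i-1},t_{i})_{\mathbb{T}}$ with $t_{i}-t_{i-1}>\delta$, then necessarily $\rho(t_{i})=t_{i-1}$, forcing $[t_{i-1},t_{i})_{\mathbb{T}}=\{t_{i-1}\}$ and making this impossible; hence $c$ is either already a node or lies in a subinterval of length $\leq\delta$, where inserting $c$ leaves both pieces $\delta$-fine. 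The remaining passage from the sum identities to the integral identities via the $\varepsilon$--$\delta$ definition is routine and I would not spell it out.
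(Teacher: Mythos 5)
Your proof is correct. The paper omits the proof of Theorem~\ref{thm5} entirely (it only remarks that the result ``can be easily verified''), so there is no argument to compare against; your endpoint decomposition --- $f\in\mathcal{IR}_{(\Delta,\ [a,b]_{\mathbb{T}})}$ if and only if $\underline{f},\overline{f}\in\mathcal{R}_{(\Delta,\ [a,b]_{\mathbb{T}})}$, with $(IR)\int_{a}^{b}f(t)\Delta t=\big[\int_{a}^{b}\underline{f}(t)\Delta t,\int_{a}^{b}\overline{f}(t)\Delta t\big]$, obtained from $S(f,D,\delta)=\big[S(\underline{f},D,\delta),S(\overline{f},D,\delta)\big]$ and the definition of $d$ --- is clearly the intended route, and your extension of the identity of Theorem~\ref{thm4} from continuous $f$ to all $IR$ $\Delta$-integrable $f$ is exactly what makes parts (1), (2) and (4) immediate. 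Your treatment of part (3), noting that a point $c$ cannot lie strictly inside a subinterval of length greater than $\delta$ in a division from Lemma~\ref{lem1} because $\rho(t_{i})=t_{i-1}$ forces $[t_{i-1},t_{i})_{\mathbb{T}}=\{t_{i-1}\}$, is the one genuinely nontrivial detail and you handle it correctly.
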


\begin{example}
\label{ex4}
Suppose that $\mathbb{T}=[-1,0]\cup 3^{\mathbb{N}_{0}}$, 
where $[-1,0]$ is a real-valued interval
and $\mathbb{N}_{0}$ is the set of nonnegative integers. Let
$f:[a,b]_{\mathbb{T}}\rightarrow \mathbb{R}_{\mathcal{I}}$ be defined by
\begin{equation*}
f(t)=
\begin{cases}
[t,t+1], & \text{if $t\in [-1,0)$},\\
[1,2], & \text{if $t=0$},\\
[t,t^{2}+1], & \text{if $t\in 3^{\mathbb{N}_{0}}$}.
\end{cases}
\end{equation*}
If $[a,b]_{\mathbb{T}}=[-1,3]_{\mathbb{T}}$, then
\begin{equation*}
\begin{split}
&(IR)\int_{-1}^{3}f(t)\Delta t\\
&=\Bigg[\int_{-1}^{3}\underline{f}(t)\Delta t,\int_{-1}^{3}\overline{f}(t)\Delta t\Bigg]\\
&=\Bigg[\int_{-1}^{0}tdt+(R)\int_{0}^{1}\Delta t+\int_{1}^{3}t\Delta t,\\
&\ \ \ \ \ \ \int_{-1}^{0}(t+1)dt+\int_{0}^{1}2\Delta t+\int_{1}^{3}(t^{2}+1)\Delta t\Bigg]\\
&=\Bigg[\frac{1}{2}t^{2}\Big|_{-1}^{0}+1+2t^{2}\Big|_{1},\\
&\ \ \ \ \ \ \ \frac{1}{2}(t^{2}+t)\Big|_{-1}^{0}+2+2t(t^{2}+1)\Big|_{1}\Bigg]\\
&=\Big[2\frac{1}{2},6\frac{1}{2}\Big].
\end{split}
\end{equation*}
\end{example}


\section{Some inequalities for the interval Riemann delta integral}
\label{sec:4}

We begin by recalling the notions of convexity on time scales.

\begin{definition}[Dinu \cite{D08}]
\label{defn4.1}
We say that $f:[a,b]_{\mathbb{T}}\rightarrow \mathbb{R}$
is a convex function if for all $x,y\in[a,b]_{\mathbb{T}}$ 
and $\alpha\in[0,1]$ we have
\begin{equation}
\label{1}
f(\alpha x+(1-\alpha)y)\leq \alpha f(x)+(1-\alpha)f(y)
\end{equation}
for which $\alpha x+(1-\alpha)y\in[a,b]_{\mathbb{T}}$.
If inequality \eqref{1} is reversed, then $f$ is said to be concave. 
If $f$ is both convex and concave, then $f$ is said to be affine. 
The set of all convex, concave and affine interval-valued functions 
are denoted by $SX([a,b]_{\mathbb{T}},\mathbb{R})$, 
$SV([a,b]_{\mathbb{T}},\mathbb{R})$, 
and $SA([a,b]_{\mathbb{T}},\mathbb{R})$, respectively.
\end{definition}

We can now introduce the concept of interval-valued convexity.

\begin{definition}
\label{defn4.2}
We say that $f:[a,b]_{\mathbb{T}}\rightarrow \mathbb{R}_{\mathcal{I}}$ 
is a convex interval-valued function if for all 
$x,y\in[a,b]_{\mathbb{T}}$ and $\alpha\in(0,1)$ we have
\begin{equation}
\label{4}
\alpha f(x)+(1-\alpha)f(y) \subseteq f(\alpha x+(1-\alpha)y)
\end{equation}
for which $\alpha x+(1-\alpha)y\in[a,b]_{\mathbb{T}}$.
If the set inclusion \eqref{4} is reversed, then $f$ is said to be concave. 
If $f$ is both convex and concave, then $f$ is said to be affine. 
The set of all convex, concave and affine interval-valued functions 
are denoted by $SX([a,b]_{\mathbb{T}},\mathbb{R}_{\mathcal{I}})$, 
$SV([a,b]_{\mathbb{T}},\mathbb{R}_{\mathcal{I}})$ 
and $SA([a,b]_{\mathbb{T}}, \mathbb{R}_{\mathcal{I}})$, respectively.
\end{definition}

\begin{remark}
\label{rmk4.0}
It is clear that if $\mathbb{T}=\mathbb{R}$, then Definition~\ref{defn4.2}
implies the definition of convexity introduced by Breckner \cite{B}.
\end{remark}

\begin{theorem}
\label{thm4.1}
Let $f:[a,b]_{\mathbb{T}}\rightarrow \mathbb{R}_{\mathcal{I}}$ 
be such that 
$$
f(t)=[\underline{f}(t),\overline{f}(t)]
$$ 
for all $t\in [a,b]_{\mathbb{T}}$. Then,

\noindent(1) $f\in SX([a,b]_{\mathbb{T}},\mathbb{R}_{\mathcal{I}})$ 
if and only if $\underline{f}\in SX([a,b]_{\mathbb{T}},\mathbb{R})$ and 
$\overline{f}\in SV([a,b]_{\mathbb{T}},\mathbb{R})$,

\noindent(2) $f\in SV([a,b]_{\mathbb{T}},\mathbb{R}_{\mathcal{I}})$
if and only if $\underline{f}\in SV([a,b]_{\mathbb{T}},\mathbb{R})$
and $\overline{f}\in SX([a,b]_{\mathbb{T}},\mathbb{R})$,

\noindent(3) $f\in SA([a,b]_{\mathbb{T}},\mathbb{R}_{\mathcal{I}})$
if and only if $\underline{f}, \overline{f}
\in SA([a,b]_{\mathbb{T}},\mathbb{R})$.
\end{theorem}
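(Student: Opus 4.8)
The plan is to reduce each interval inclusion in Definition~\ref{defn4.2} to a pair of scalar inequalities on the endpoints, and then match those against Definition~\ref{defn4.1}. First I would record the two elementary facts that drive everything: for $\alpha\in(0,1)$ both $\alpha>0$ and $1-\alpha>0$, so by the scalar-multiplication rule $\alpha[\underline{u},\overline{u}]=[\alpha\underline{u},\alpha\overline{u}]$ together with the endpointwise formula for interval addition, the convex combination of interval values is itself computed endpointwise; and the inclusion criterion $[\underline{A},\overline{A}]\subseteq[\underline{B},\overline{B}]$ holds if and only if $\underline{B}\leq\underline{A}$ and $\overline{A}\leq\overline{B}$.

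Then, for fixed $x,y\in[a,b]_{\mathbb{T}}$ and $\alpha\in(0,1)$, and writing $z=\alpha x+(1-\alpha)y$, I would record
$$\alpha f(x)+(1-\alpha)f(y)=\big[\alpha\underline{f}(x)+(1-\alpha)\underline{f}(y),\ \alpha\overline{f}(x)+(1-\alpha)\overline{f}(y)\big].$$
For part (1), the defining inclusion $\alpha f(x)+(1-\alpha)f(y)\subseteq f(z)$ unpacks, by the inclusion criterion above (with the combination in the role of $[\underline{A},\overline{A}]$ and $f(z)$ in the role of $[\underline{B},\overline{B}]$), into
$$\underline{f}(z)\leq\alpha\underline{f}(x)+(1-\alpha)\underline{f}(y),\qquad \alpha\overline{f}(x)+(1-\alpha)\overline{f}(y)\leq\overline{f}(z).$$
The first inequality is exactly \eqref{1} applied to $\underline{f}$, hence equivalent to $\underline{f}\in SX([a,b]_{\mathbb{T}},\mathbb{R})$; the second is \eqref{1} reversed applied to $\overline{f}$, hence equivalent to $\overline{f}\in SV([a,b]_{\mathbb{T}},\mathbb{R})$. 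Since each step is an equivalence at the level of the quantified triple $(x,y,\alpha)$, quantifying over all admissible triples yields part (1).

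For part (2) I would repeat the computation with the reversed inclusion $f(z)\subseteq\alpha f(x)+(1-\alpha)f(y)$, which now gives $\alpha\underline{f}(x)+(1-\alpha)\underline{f}(y)\leq\underline{f}(z)$ and $\overline{f}(z)\leq\alpha\overline{f}(x)+(1-\alpha)\overline{f}(y)$; the first is \eqref{1} reversed for $\underline{f}$ (so $\underline{f}\in SV$) and the second is \eqref{1} for $\overline{f}$ (so $\overline{f}\in SX$). Part (3) is then immediate: $f\in SA$ means $f\in SX\cap SV$, so by parts (1) and (2) this holds iff $\underline{f}\in SX\cap SV=SA$ and $\overline{f}\in SV\cap SX=SA$.

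The only real care needed is in bookkeeping the direction of the inclusion against the direction of \eqref{1}. Because Definition~\ref{defn4.2} places the convex combination of values \emph{inside} the value at the combination, the lower endpoint inherits genuine convexity while the upper endpoint inherits concavity; the tempting error is to assign both endpoints the same sense, and I would guard against it by always reading the inclusion through the explicit criterion $\underline{B}\leq\underline{A}$, $\overline{A}\leq\overline{B}$ rather than by analogy. A minor point worth a remark is the domain restriction ``for which $\alpha x+(1-\alpha)y\in[a,b]_{\mathbb{T}}$'', which appears identically in Definitions~\ref{defn4.1} and \ref{defn4.2} and so is transported through the equivalence without change.
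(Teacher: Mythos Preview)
Your proposal is correct and follows essentially the same approach as the paper: unpack the interval inclusion \eqref{4} into the pair of endpoint inequalities via the inclusion criterion, then identify these with convexity/concavity of $\underline{f}$ and $\overline{f}$. The paper only writes out part (1) and leaves (2) and (3) implicit, whereas you treat all three and are more explicit about the equivalence in both directions; otherwise the arguments coincide.
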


\begin{proof}
We only prove that part (1) of Theorem~\ref{thm4.1} holds.
Suppose that $f\in SX([a,b]_{\mathbb{T}},\mathbb{R}_{\mathcal{I}})$
and consider $x,y\in[a,b]_{\mathbb{T}}$, $\alpha\in[0,1]$. Then,
$$
\alpha f(x)+(1-\alpha)f(y) \subseteq f(\alpha x+(1-\alpha)y),
$$
that is,
\begin{equation}
\label{5}
\begin{split}
&\big[\alpha\underline{f}(x)+(1-\alpha)\underline{f}(y),
\alpha\overline{f}(x)+(1-\alpha)\overline{f}(y)\big]\\ 
&\subseteq \big[\underline{f}(\alpha x+(1-\alpha)y),
\overline{f}(\alpha x+(1-\alpha)y)\big].
\end{split}
\end{equation}
It follows that
$$
\alpha\underline{f}(x)+(1-\alpha)\underline{f}(y)
\geq \underline{f}(\alpha x+(1-\alpha)y)
$$
and
$$
\alpha\overline{f}(x)+(1-\alpha)\overline{f}(y)
\leq \overline{f}(\alpha x+(1-\alpha)y).
$$
This shows that
$$
\underline{f}\in SX([a,b]_{\mathbb{T}},\mathbb{R})\  
{\rm and }\ \overline{f}\in SV([a,b]_{\mathbb{T}},\mathbb{R}).
$$
Conversely, if
$$
\underline{f}\in SX([a,b]_{\mathbb{T}},\mathbb{R})\  
{\rm and }\ \overline{f}\in SV([a,b]_{\mathbb{T}},\mathbb{R}),
$$
by Definition~\ref{defn4.2} and the set inclusion \eqref{5},
we have \\$f\in SX([a,b]_{\mathbb{T}},\mathbb{R}_{\mathcal{I}})$.
\end{proof}

\begin{theorem}[Dinu \cite{D08}]
\label{thm4.2}
A convex function on \\$[a,b]_{\mathbb{T}}$ 
is continuous on $(a,b)_{\mathbb{T}}$.
\end{theorem}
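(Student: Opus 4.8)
The plan is to fix an arbitrary $t_{0}\in(a,b)_{\mathbb{T}}$ and show that $f$ is continuous at $t_{0}$, treating the one‑sided behaviour of $f$ separately and exploiting that continuity in a scattered direction is automatic. First I would observe that if $t_{0}$ is left‑scattered, then no point of $\mathbb{T}$ lies in $(\rho(t_{0}),t_{0})$, so the neighbourhood of radius $t_{0}-\rho(t_{0})$ excludes any approach to $t_{0}$ from the left; symmetrically on the right when $t_{0}$ is right‑scattered. Consequently it suffices to prove: (i) if $t_{0}$ is right‑dense, then $\lim_{t\to t_{0}^{+}}f(t)=f(t_{0})$; and (ii) if $t_{0}$ is left‑dense, then $\lim_{t\to t_{0}^{-}}f(t)=f(t_{0})$. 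The continuity of $f$ at $t_{0}$ then follows by combining (i) and/or (ii) with the trivial scattered‑side estimates and taking the minimum of the relevant $\delta$'s; if $t_{0}$ is isolated the claim is immediate since $\{t_{0}\}$ is open in $\mathbb{T}$.

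For (i), fix points $r,s\in\mathbb{T}$ with $a\le r<t_{0}<s\le b$ (for instance $r=a$ and $s=b$; these exist because $a,b\in\mathbb{T}$ and $a<t_{0}<b$). Given $t\in\mathbb{T}$ with $t_{0}<t<s$, I would write $t=\alpha s+(1-\alpha)t_{0}$ with $\alpha=(t-t_{0})/(s-t_{0})\in(0,1)$; since $t\in[a,b]_{\mathbb{T}}$, the convexity inequality \eqref{1} applies and gives $f(t)\le\alpha f(s)+(1-\alpha)f(t_{0})$, whence $\limsup_{t\to t_{0}^{+}}f(t)\le f(t_{0})$ because $\alpha\to 0$ as $t\to t_{0}^{+}$. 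For the matching lower bound I would instead write $t_{0}=\beta t+(1-\beta)r$ with $\beta=(t_{0}-r)/(t-r)\in(0,1)$; since $t_{0}\in[a,b]_{\mathbb{T}}$, \eqref{1} gives $f(t_{0})\le\beta f(t)+(1-\beta)f(r)$, i.e. $f(t)\ge\beta^{-1}\bigl(f(t_{0})-(1-\beta)f(r)\bigr)$, and letting $t\to t_{0}^{+}$ (so $\beta\to 1$ and $1-\beta\to 0$) yields $\liminf_{t\to t_{0}^{+}}f(t)\ge f(t_{0})$. Together these give (i). The proof of (ii) is entirely symmetric: with the same anchors $r,s$, for $t\in\mathbb{T}$ with $r<t<t_{0}$ one writes $t=\alpha t_{0}+(1-\alpha)r$ for the upper bound and $t_{0}=\beta t+(1-\beta)s$ for the lower bound, and passes to the limit $t\to t_{0}^{-}$.

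I expect the only genuine subtlety to be the domain restriction in Dinu's Definition~\ref{defn4.1}: the inequality \eqref{1} is available only when the relevant convex combination lands in $[a,b]_{\mathbb{T}}$. The argument is organised precisely to finesse this — in every application the ``interpolated'' point is one of the already‑chosen time‑scale points $t_{0}$ or $t$, so the hypothesis $\alpha x+(1-\alpha)y\in[a,b]_{\mathbb{T}}$ holds automatically and no extra density of $\mathbb{T}$ is needed there. The remaining work is routine bookkeeping: verifying the elementary identities for $\alpha$ and $\beta$, checking they lie in $(0,1)$, and assembling the four cases (both‑sided dense, left‑dense/right‑scattered, right‑dense/left‑scattered, isolated) into a single $\delta$. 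One may note in passing that the two bounds used above are affine in $t$, so the argument in fact yields local Lipschitz continuity of $f$ on $(a,b)_{\mathbb{T}}$, though this is not needed for the stated result.
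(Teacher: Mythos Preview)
The paper does not supply a proof of Theorem~\ref{thm4.2}; it is quoted from Dinu~\cite{D08} and invoked as a black box in the proof of Theorem~\ref{thm4.4}. So there is no in-paper argument to compare against.

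On its own merits your argument is correct, and it is essentially the classical proof that a real convex function on an interval is continuous in the interior, transplanted with the right care to the time-scale setting. The one genuine hazard --- that Definition~\ref{defn4.1} only grants inequality~\eqref{1} when the convex combination $\alpha x+(1-\alpha)y$ happens to land in $[a,b]_{\mathbb{T}}$ --- you have handled exactly as one must: in every application the interpolated point is the pre-chosen time-scale point $t$ or $t_{0}$, so the side condition is automatic and no density of $\mathbb{T}$ between the anchors is required. The scattered/dense case split is the natural way to reduce the time-scale statement to a one-sided real-variable computation, and your remark that the two estimates are affine in $t$ (hence give local Lipschitz continuity) is a legitimate bonus. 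Nothing is missing.
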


\begin{theorem}
\label{thm4.4}
Let $f:[a,b]_{\mathbb{T}}\rightarrow \mathbb{R}_{\mathcal{I}}$ 
be such that 
$$
f(t)=[\underline{f}(t),\overline{f}(t)]
$$
for all $t\in [a,b]_{\mathbb{T}}$. If 
$$
f\in SX([a,b]_{\mathbb{T}},\mathbb{R}_{\mathcal{I}})
\cup SV([a,b]_{\mathbb{T}},\mathbb{R}_{\mathcal{I}})
\cup SA([a,b]_{\mathbb{T}},\mathbb{R}_{\mathcal{I}}),
$$ 
then $f\in\mathcal{IR}_{(\Delta,\ [a,b]_{\mathbb{T}})}$.
\end{theorem}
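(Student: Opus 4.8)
The plan is to reduce the statement about the interval-valued function $f$ to the corresponding statement about its two real-valued endpoint functions $\underline{f}$ and $\overline{f}$, and then invoke the classical time-scale theory. By Remark~\ref{rmk3.2}, a continuous interval-valued function is $IR$ $\Delta$-integrable (indeed this is precisely Theorem~\ref{thm4}), and a continuous real-valued function on $[a,b]_{\mathbb{T}}$ is Riemann $\Delta$-integrable. So it suffices to show that membership of $f$ in any one of the three classes $SX$, $SV$, $SA$ forces both $\underline{f}$ and $\overline{f}$ to be continuous on the open interval $(a,b)_{\mathbb{T}}$, and that continuity on the closed interval $[a,b]_{\mathbb{T}}$ (or at least $IR$ $\Delta$-integrability) follows.

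The key steps, in order, are as follows. First I would apply Theorem~\ref{thm4.1}: if $f\in SX([a,b]_{\mathbb{T}},\mathbb{R}_{\mathcal{I}})$ then $\underline{f}\in SX([a,b]_{\mathbb{T}},\mathbb{R})$ and $\overline{f}\in SV([a,b]_{\mathbb{T}},\mathbb{R})$; the cases $SV$ and $SA$ are handled symmetrically, and in every case both $\underline{f}$ and $\overline{f}$ lie in $SX([a,b]_{\mathbb{T}},\mathbb{R})\cup SV([a,b]_{\mathbb{T}},\mathbb{R})$. Second, by Theorem~\ref{thm4.2} (and its obvious analogue for concave functions, obtained by negation), each of $\underline{f}$ and $\overline{f}$ is continuous on $(a,b)_{\mathbb{T}}$; hence $f$ is continuous on $(a,b)_{\mathbb{T}}$. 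Third, I would argue that $f$ is $IR$ $\Delta$-integrable on $[a,b]_{\mathbb{T}}$: a convex (or concave) real function on $[a,b]_{\mathbb{T}}$ is bounded — it is bounded below by a supporting-line argument and bounded above by the larger of its values at points near the two endpoints together with the graininess jumps — so $\underline f$ and $\overline f$ are bounded and continuous except possibly at the (at most two) endpoints, whence Riemann $\Delta$-integrable by the standard criterion in \cite{BP2}. Applying the endpoint characterization of the $IR$ $\Delta$-integral then gives $f\in\mathcal{IR}_{(\Delta,\ [a,b]_{\mathbb{T}})}$.

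The main obstacle I anticipate is the behavior at the endpoints $a$ and $b$: Theorem~\ref{thm4.2} only delivers continuity on the \emph{open} interval $(a,b)_{\mathbb{T}}$, so a convex endpoint function need not be continuous at $a$ or $b$, and one must be a little careful that this does not break $IR$ $\Delta$-integrability. This is resolved by noting that a function which is bounded on $[a,b]_{\mathbb{T}}$ and continuous on $[a,b]_{\mathbb{T}}$ except at finitely many points is still Riemann $\Delta$-integrable (modifying a Riemann $\Delta$-integrable function at finitely many points does not affect integrability on a time scale, and altering the two endpoints to restore continuity changes the value of $\underline f,\overline f$ only on a set contributing arbitrarily little to any Riemann $\Delta$-sum). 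Thus the reduction to the real-valued, continuous case goes through, and combining with Theorem~\ref{thm4} completes the proof.
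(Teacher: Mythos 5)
Your proof follows essentially the same route as the paper's: apply Theorem~\ref{thm4.1} to reduce to the real endpoint functions $\underline{f}$ and $\overline{f}$, invoke Theorem~\ref{thm4.2} for their continuity, and conclude Riemann $\Delta$-integrability of each endpoint function (hence $IR$ $\Delta$-integrability of $f$) from the standard criterion in \cite{BP2}. The only difference is that you explicitly address the possible discontinuity of a convex function at the endpoints $a$ and $b$ --- a point the paper's proof silently glosses over by asserting continuity on all of $[a,b]_{\mathbb{T}}$ --- and your patch (boundedness of convex functions plus integrability of bounded functions with finitely many discontinuities) is sound.
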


\begin{proof}
Suppose that
$$
f\in SV([a,b]_{\mathbb{T}},\mathbb{R}_{\mathcal{I}})
\cup SV([a,b]_{\mathbb{T}},\mathbb{R}_{\mathcal{I}})
\cup SA([a,b]_{\mathbb{T}},\mathbb{R}_{\mathcal{I}}).
$$
Due to Theorems~\ref{thm4.1} and \ref{thm4.2},
it follows that $\underline{f}$ and $\overline{f}$ 
are continuous. Then, from Theorem~5.19 of \cite{BP2}, we have that
$$
\overline{f}(t), \underline{f}(t)\in\mathcal{R}_{(\Delta,\ [a,b]_{\mathbb{T}})}.
$$
Hence, $f\in\mathcal{IR}_{(\Delta,\ [a,b]_{\mathbb{T}})}$.
\end{proof}

\begin{theorem}[Wong et al. \cite{W06}]
\label{thm4.5}
Let $a,b\in[a,b]_{\mathbb{T}}$ and $c,d\in\mathbb{R}$. Suppose that
$g\in C_{rd}([a,b]_{\mathbb{T}},(c,d))$ and 
$h\in C_{rd}([a,b]_{\mathbb{T}},\mathbb{R})$ with
$$
\int_{a}^{b}|h(s)|\Delta s>0.
$$
If $f\in C((c,d),\mathbb{R})$ is convex, then
\begin{equation}\label{6}
\begin{split}
&f\Bigg(\frac{\int_{a}^{b}|h(s)|g(s)\Delta s}{\int_{a}^{b}|h(s)|\Delta s}\Bigg)
\leq \frac{\int_{a}^{b}|h(s)|f(g(s))\Delta s}{\int_{a}^{b}|h(s)|\Delta s}.
\end{split}
\end{equation}
If $f$ is concave, then inequality \eqref{6} is reversed.
\end{theorem}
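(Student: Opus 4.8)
The plan is to run the classical supporting-line argument for Jensen's inequality, with the only genuinely time-scale ingredient being the elementary linearity and monotonicity of the delta integral. Write
$$
x_{0}=\frac{\int_{a}^{b}|h(s)|g(s)\Delta s}{\int_{a}^{b}|h(s)|\Delta s}
$$
for the weighted mean of $g$; note that all the integrands appearing ($|h|$, $|h|g$, $f\circ g$, $|h|\,(f\circ g)$) are $rd$-continuous, being products and compositions of $rd$-continuous and continuous functions, hence delta-integrable by the standard result (Theorem~5.19 of \cite{BP2}). The first step is to check that $x_{0}\in(c,d)$, so that $f(x_{0})$ makes sense. Since $g$ is $rd$-continuous on the compact set $[a,b]_{\mathbb{T}}$, it is bounded and attains its bounds, so there are constants $m,M$ with $c<m\le g(s)\le M<d$ for all $s\in[a,b]_{\mathbb{T}}$. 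Multiplying by the nonnegative weight $|h(s)|$, integrating, and dividing by $\int_{a}^{b}|h(s)|\Delta s>0$ (using monotonicity of the delta integral) gives $m\le x_{0}\le M$, hence $x_{0}\in(c,d)$.

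The second step invokes the standard fact that a convex function on the \emph{open} real interval $(c,d)$ has a supporting line at every interior point: there is a slope $\beta\in\mathbb{R}$ (any element of the subdifferential of $f$ at $x_{0}$, which is nonempty) such that
$$
f(u)\ge f(x_{0})+\beta(u-x_{0})\quad\text{for all }u\in(c,d).
$$
Substituting $u=g(s)$, legitimate because $g(s)\in(c,d)$, yields $f(g(s))\ge f(x_{0})+\beta\bigl(g(s)-x_{0}\bigr)$ for every $s\in[a,b]_{\mathbb{T}}$.

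The third step multiplies this pointwise inequality by $|h(s)|\ge 0$ and integrates over $[a,b]_{\mathbb{T}}$, using linearity and monotonicity of the delta integral:
$$
\int_{a}^{b}|h(s)|f(g(s))\Delta s\ge f(x_{0})\int_{a}^{b}|h(s)|\Delta s+\beta\left(\int_{a}^{b}|h(s)|g(s)\Delta s-x_{0}\int_{a}^{b}|h(s)|\Delta s\right).
$$
By the definition of $x_{0}$ the parenthesised term vanishes, and dividing through by $\int_{a}^{b}|h(s)|\Delta s>0$ gives \eqref{6}. For the concave case, apply the convex case to $-f$, which reverses the inequality.

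The one delicate point is the first step: guaranteeing that the weighted mean $x_{0}$ stays inside the open interval $(c,d)$ on which $f$ is controlled. This is exactly where compactness of $[a,b]_{\mathbb{T}}$ and $rd$-continuity of $g$ are used. Everything else is routine — the existence of the supporting line is a classical fact about convex functions of a real variable and requires no time-scale machinery, since $(c,d)\subseteq\mathbb{R}$ is an ordinary interval, and the remaining manipulations only use that $\int\cdot\,\Delta s$ is linear and order-preserving.
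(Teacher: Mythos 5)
The paper does not actually prove Theorem~\ref{thm4.5}; it is imported from Wong et al.~\cite{W06} as a known result, so there is no in-paper proof to compare against. Your supporting-line argument is the standard proof of this inequality and is essentially the argument of the cited source: a subgradient $\beta$ of the convex $f$ at the interior point $x_{0}$, the pointwise inequality $f(g(s))\ge f(x_{0})+\beta\,(g(s)-x_{0})$, and then linearity and monotonicity of the $\Delta$-integral; the reduction of the concave case to $-f$ is also fine. The one step that needs repair is exactly the one you flag as delicate. An $rd$-continuous function on $[a,b]_{\mathbb{T}}$ is bounded, but it need \emph{not} attain its bounds: it is only required to be right-continuous at right-dense points and to have finite left limits at left-dense points, so (for example) $g$ can tend to $c$ from the left at a left-dense point while satisfying $g(s)>c$ everywhere, in which case $\inf g=c$ and your chain $c<m\le g(s)$ fails. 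The conclusion $x_{0}\in(c,d)$ is nevertheless true, but for a different reason: if one had $\int_{a}^{b}|h(s)|\,(g(s)-c)\,\Delta s=0$, then the nonnegative $rd$-continuous integrand would have to vanish at every right-scattered $t\in[a,b)_{\mathbb{T}}$ (its value times $\mu(t)$ is dominated by the integral) and, by right-continuity, at every right-dense point of $[a,b)_{\mathbb{T}}$ as well; since $g(s)-c>0$ for all $s$, this would force $|h|\equiv 0$ on $[a,b)_{\mathbb{T}}$, contradicting $\int_{a}^{b}|h(s)|\Delta s>0$. Hence $x_{0}>c$, and symmetrically $x_{0}<d$. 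With that substitution for your first step, the proof is complete and correct.
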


\begin{theorem}[Jensen's inequality]
\label{thm4.6}
\\Let $g\in C_{rd}([a,b]_{\mathbb{T}},(c,d))$ 
and $h\in C_{rd}([a,b]_{\mathbb{T}},\mathbb{R})$ with
$$
\int_{a}^{b}|h(s)|\Delta s>0.
$$
If $f\in C((c,d),\mathbb{R}^{+}_{\mathcal{I}})$ 
is a convex function, then
\begin{equation*}
\begin{split}
&\frac{(IR)\int_{a}^{b}|h(s)|f(g(s))\Delta s}{\int_{a}^{b}|h(s)|\Delta s}
\subseteq f\Bigg(\frac{\int_{a}^{b}|h(s)|g(s)\Delta s}{\int_{a}^{b}|h(s)|\Delta s}\Bigg).
\end{split}
\end{equation*}
\end{theorem}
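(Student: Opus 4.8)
The plan is to reduce the interval-valued Jensen inequality to the two scalar Jensen inequalities already available in Theorem~\ref{thm4.5}, exploiting the endpoint characterization of convex interval-valued functions from Theorem~\ref{thm4.1}. First I would write $f(g(s))=[\underline{f}(g(s)),\overline{f}(g(s))]$; since $f\in C((c,d),\mathbb{R}^{+}_{\mathcal{I}})$ is convex, Theorem~\ref{thm4.1}(1) gives that $\underline{f}$ is convex and $\overline{f}$ is concave on $(c,d)$, and both are continuous (hence $rd$-continuous). Because $f$ is positive-interval-valued, $|h(s)|f(g(s))=[\,|h(s)|\underline{f}(g(s)),\,|h(s)|\overline{f}(g(s))\,]$, so by Theorem~\ref{thm4} (continuity of the integrand) the $IR$ $\Delta$-integral is computed endpoint-wise:
\begin{equation*}
(IR)\int_{a}^{b}|h(s)|f(g(s))\Delta s
=\Bigg[\int_{a}^{b}|h(s)|\underline{f}(g(s))\Delta s,\ \int_{a}^{b}|h(s)|\overline{f}(g(s))\Delta s\Bigg].
\end{equation*}

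Next I would divide through by the positive scalar $\int_{a}^{b}|h(s)|\Delta s$, which by the scalar multiplication rule simply scales both endpoints. Applying Theorem~\ref{thm4.5} to the convex function $\underline{f}$ yields
\begin{equation*}
\underline{f}\Bigg(\frac{\int_{a}^{b}|h(s)|g(s)\Delta s}{\int_{a}^{b}|h(s)|\Delta s}\Bigg)
\leq \frac{\int_{a}^{b}|h(s)|\underline{f}(g(s))\Delta s}{\int_{a}^{b}|h(s)|\Delta s},
\end{equation*}
and applying the concave case of Theorem~\ref{thm4.5} to $\overline{f}$ reverses the inequality:
\begin{equation*}
\frac{\int_{a}^{b}|h(s)|\overline{f}(g(s))\Delta s}{\int_{a}^{b}|h(s)|\Delta s}
\leq \overline{f}\Bigg(\frac{\int_{a}^{b}|h(s)|g(s)\Delta s}{\int_{a}^{b}|h(s)|\Delta s}\Bigg).
\end{equation*}
Writing $\bar g:=\dfrac{\int_{a}^{b}|h(s)|g(s)\Delta s}{\int_{a}^{b}|h(s)|\Delta s}$, these two inequalities say exactly that the left endpoint of the integral-average interval is $\geq\underline{f}(\bar g)$ and its right endpoint is $\leq\overline{f}(\bar g)$, which by the definition of ``$\subseteq$'' on $\mathbb{R}_{\mathcal{I}}$ is precisely
\begin{equation*}
\frac{(IR)\int_{a}^{b}|h(s)|f(g(s))\Delta s}{\int_{a}^{b}|h(s)|\Delta s}\subseteq f(\bar g),
\end{equation*}
as claimed.

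Two small points need care rather than being hard. One is that $\bar g$ lies in $(c,d)$ so that $f(\bar g)$ is defined — this follows because $g$ takes values in $(c,d)$ and the weighted average with weight $|h|$ (with $\int|h|>0$) of values in an interval stays in its closure, and a separate argument via the endpoints of $(c,d)$, or the standard convexity argument, keeps it in the open interval; this is implicitly needed already for Theorem~\ref{thm4.5} to make sense. The other is the applicability of Theorem~\ref{thm4} and Theorem~\ref{thm4.5}, which requires $s\mapsto|h(s)|\underline{f}(g(s))$ and $s\mapsto|h(s)|\overline{f}(g(s))$ to be $rd$-continuous; this holds because $g$ is $rd$-continuous into $(c,d)$, $\underline{f},\overline{f}$ are continuous on $(c,d)$ by Theorem~\ref{thm4.2}, and products and absolute values of $rd$-continuous functions are $rd$-continuous. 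The main (and only real) obstacle is bookkeeping: making sure the direction of each scalar inequality matches the orientation of the endpoints under the reversed order implicit in ``$\subseteq$'', i.e. that left endpoints compare with ``$\geq$'' and right endpoints with ``$\leq$''. Once that sign bookkeeping is done correctly, the result is immediate.
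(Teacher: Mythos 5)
Your proposal is correct and follows essentially the same route as the paper's own proof: compute the $IR$ $\Delta$-integral endpoint-wise, invoke Theorem~\ref{thm4.1}(1) so that $\underline{f}$ is convex and $\overline{f}$ is concave, apply the scalar Jensen inequality of Theorem~\ref{thm4.5} to each endpoint, and read off the inclusion from the definition of ``$\subseteq$''. Your added remarks on why $\bar g\in(c,d)$ and on the $rd$-continuity of the integrands are points the paper leaves implicit, but they do not change the argument.
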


\begin{proof}
By hypothesis, we have 
$$
|h|\overline{f(g)},\ \  
|h|\underline{f(g)}\in\mathcal{R}_{(\Delta,\ [a,b])}.
$$ 
Hence, $|h|f(g)\in\mathcal{IR}_{(\Delta,\ [a,b])}$ and
\begin{equation*}
\begin{split}
&(IR)\int_{a}^{b}|h(s)|f(g(s))\Delta s\\
&=\Bigg[\int_{a}^{b}|h(s)|\underline{f(g)}(s)\Delta s,\int_{a}^{b}|h(s)|\overline{f(g)}(s)\Delta s\Bigg].
\end{split}
\end{equation*}
From Theorem~\ref{thm4.5}, it follows that
$$
\underline{f}\Bigg(\frac{\int_{a}^{b}|h(s)|g(s)\Delta s}{(\int_{a}^{b}|h(s)|\Delta s}\Bigg)\leq\frac{\int_{a}^{b}|h(s)|\underline{f(g)}(s)\Delta s}{\int_{a}^{b}|h(s)|\Delta s}
$$
and
$$
\overline{f}\Bigg(\frac{\int_{a}^{b}|h(s)|g(s)\Delta s}{\int_{a}^{b}|h(s)|\Delta s}\Bigg)\geq\frac{\int_{a}^{b}|h(s)|\overline{f(g)}(s)\Delta s}{\int_{a}^{b}|h(s)|\Delta s},
$$
which implies
\begin{equation*}
\begin{split}
&\Bigg[\frac{\int_{a}^{b}|h(s)|\underline{f(g)}(s)\Delta s}{\int_{a}^{b}|h(s)|\Delta s}, \frac{\int_{a}^{b}|h(s)|\overline{f(g)}(s)\Delta s}{\int_{a}^{b}|h(s)|\Delta s}\Bigg]\\
&\ \ \subseteq \Bigg[\underline{f}\Bigg(\frac{\int_{a}^{b}|h(s)|g(s)\Delta s}{\int_{a}^{b}|h(s)|\Delta s}\Bigg),\overline{f}\Bigg(\frac{\int_{a}^{b}|h(s)|g(s)\Delta s}{\int_{a}^{b}|h(s)|\Delta s}\Bigg)\Bigg],
\end{split}
\end{equation*}
that is,
\begin{equation*}
\begin{split}
&\frac{\Bigg[\int_{a}^{b}|h(s)|\underline{f(g)}(s)\Delta s,
\int_{a}^{b}|h(s)|\overline{f(g)}(s)\Delta s\Bigg]}{\int_{a}^{b}|h(s)|\Delta s}\\
&\ \ \subseteq \Bigg[\underline{f}\Bigg(\frac{\int_{a}^{b}|h(s)|g(s)\Delta s}{\int_{a}^{b}|h(s)|\Delta s}\Bigg),\overline{f}\Bigg(\frac{\int_{a}^{b}|h(s)|g(s)\Delta s}{\int_{a}^{b}|h(s)|\Delta s}\Bigg)\Bigg].
\end{split}
\end{equation*}
Finally, we obtain
\begin{equation*}
\frac{(IR)\int_{a}^{b}|h(s)|f(g(s))\Delta s}{\int_{a}^{b}|h(s)|\Delta s}
\subseteq f\Bigg(\frac{\int_{a}^{b}|h(s)|g(s)\Delta s}{\int_{a}^{b}|h(s)|\Delta s}\Bigg).
\end{equation*}
The proof is complete.
\end{proof}

\begin{example}
\label{ex5}
Suppose that $[a,b]_{\mathbb{T}}=[0,1]\cup \{\frac{3}{2}\}$, 
where $[0,1]$ is a real-valued interval. Let
$g(s)=s^{2}$, $h(s)=e^{s}$, and
$f(s)=[s^{2},4\sqrt{s}]$.
Then
\begin{equation*}
\begin{split}
&\frac{(IR)\int_{a}^{b}|h(s)|f(g(s))\Delta s}{\int_{a}^{b}|h(s)|\Delta s}\\
&=\frac{(IR)\int_{0}^{\frac{3}{2}}\big[s^{4}e^{s},4se^{s}\big]
\Delta s}{\int_{0}^{\frac{3}{2}}e^{s}\Delta s}\\
&=\frac{\bigg[\int_{0}^{\frac{3}{2}}s^{4}e^{s}\Delta s,
\int_{0}^{\frac{3}{2}}4se^{s}\Delta s\bigg]}{\int_{0}^{\frac{3}{2}}e^{s}\Delta s}\\
&=\frac{\bigg[\int_{0}^{1}s^{4}e^{s}ds+\int_{1}^{\frac{3}{2}}s^{4}e^{s}\Delta s,\int_{0}^{1}4se^{s}ds+\int_{1}^{\frac{3}{2}}4se^{s}\Delta s\bigg]}{
\int_{0}^{1}e^{s}ds+\int_{1}^{\frac{3}{2}}e^{s}\Delta s}\\
&=\frac{\bigg[9\frac{1}{2}e-24,4+2e\bigg]}{\frac{3}{2}e-1}\\
&=\Bigg[\frac{19e-48}{3e-2},\frac{8+4e}{3e-2}\Bigg],
\end{split}
\end{equation*}
and
\begin{equation*}
\begin{split}
&f\Bigg(\frac{\int_{a}^{b}|h(s)|g(s)\Delta s}{\int_{a}^{b}|h(s)|\Delta s}\Bigg)\\
&=f\Bigg(\frac{\int_{0}^{\frac{3}{2}}s^{2}e^{s}\Delta s}{\int_{0}^{\frac{3}{2}}e^{s}\Delta s}\Bigg)\\
&=f\Bigg(\frac{\int_{0}^{1}s^{2}e^{s}ds+\int_{1}^{\frac{3}{2}}s^{2}e^{s}\Delta s}{\frac{3}{2}e-1}\Bigg)\\
&=f\Bigg(\frac{\frac{3}{2}e-2}{\frac{3}{2}e-1}\Bigg)\\
&=\Bigg[\bigg(\frac{3e-4}{3e-2}\bigg)^{2},4\sqrt{\frac{3e-4}{3e-2}}\Bigg].
\end{split}
\end{equation*}
It follows that
\begin{equation*}
\begin{split}
&\Bigg[\frac{19e-48}{3e-2},\frac{8+4e}{3e-2}\Bigg]
\subseteq \Bigg[\bigg(\frac{3e-4}{3e-2}\bigg)^{2},4\sqrt{\frac{3e-4}{3e-2}}\Bigg].
\end{split}
\end{equation*}
\end{example}

It is clear that if $[a,b]_{\mathbb{T}}=[0,1]$ and $h(s)\equiv1$,
then we get a similar result given in \cite[Theorem~3.5]{C17} 
by T. M. Costa. Similarly, we can get the following results
that generalize \cite[Theorem~3.4]{C17} and \cite[Corollary 3.3]{C17}.

\begin{theorem}
\label{thm4.7}
Let $g\in C_{rd}([a,b]_{\mathbb{T}},(c,d))$ and  \\
$h\in C_{rd}([a,b]_{\mathbb{T}},\mathbb{R})$ with
$$
\int_{a}^{b}|h(s)|\Delta s>0.
$$
If $f\in C((c,d),\mathbb{R}^{+}_{\mathcal{I}})$ is a concave function, then
\begin{equation*}
\begin{split}
&\frac{(IR)\int_{a}^{b}|h(s)|f(g(s))\Delta s}{\int_{a}^{b}|h(s)|\Delta s}
\supseteq f\Bigg(\frac{\int_{a}^{b}|h(s)|\underline{f(g)}(s)\Delta s}{
\int_{a}^{b}|h(s)|\Delta s}\Bigg).
\end{split}
\end{equation*}
\end{theorem}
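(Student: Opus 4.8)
The plan is to transcribe the proof of Theorem~\ref{thm4.6}, interchanging the roles of ``convex'' and ``concave''. Introduce the weighted mean
$$
G:=\frac{\int_{a}^{b}|h(s)|g(s)\Delta s}{\int_{a}^{b}|h(s)|\Delta s}.
$$
First I would check that $G\in(c,d)$, so that $f(G)$ is defined: $g$ is $rd$-continuous on the compact set $[a,b]_{\mathbb{T}}$, hence attains a minimum and a maximum, both lying in $(c,d)$, and $G$ lies between them because $|h|\ge 0$ and $\int_{a}^{b}|h(s)|\Delta s>0$. Next, exactly as in the proof of Theorem~\ref{thm4.6}, continuity of $f$ makes $\underline{f}$ and $\overline{f}$ continuous, so $|h|\,\underline{f(g)}$ and $|h|\,\overline{f(g)}$ are $rd$-continuous and therefore belong to $\mathcal{R}_{(\Delta,\ [a,b]_{\mathbb{T}})}$; consequently $|h|f(g)\in\mathcal{IR}_{(\Delta,\ [a,b]_{\mathbb{T}})}$ and
$$
(IR)\int_{a}^{b}|h(s)|f(g(s))\Delta s=\Bigg[\int_{a}^{b}|h(s)|\underline{f(g)}(s)\Delta s,\ \int_{a}^{b}|h(s)|\overline{f(g)}(s)\Delta s\Bigg],
$$
a genuine interval since $\underline{f}(g(s))\le\overline{f}(g(s))$ pointwise and $|h|\ge 0$.

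The second step is to apply Theorem~\ref{thm4.5} to each endpoint separately. Since $f$ is concave, the endpoint characterization of Theorem~\ref{thm4.1}(2) gives that $\underline{f}$ is concave and $\overline{f}$ is convex on $(c,d)$. Applying the concave case of Theorem~\ref{thm4.5} to $\underline{f}$, with weight $|h|$ and inner function $g$, yields
$$
\underline{f}(G)\ge\frac{\int_{a}^{b}|h(s)|\underline{f(g)}(s)\Delta s}{\int_{a}^{b}|h(s)|\Delta s},
$$
while applying the convex case to $\overline{f}$ yields
$$
\overline{f}(G)\le\frac{\int_{a}^{b}|h(s)|\overline{f(g)}(s)\Delta s}{\int_{a}^{b}|h(s)|\Delta s}.
$$

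Finally, I would divide the $IR$ $\Delta$-integral obtained in the first step by $\int_{a}^{b}|h(s)|\Delta s>0$ and compare its endpoints with the two scalar inequalities of the second step: the left endpoint of the resulting interval is at most $\underline{f}(G)$, and its right endpoint is at least $\overline{f}(G)$. Since $f(G)=[\underline{f}(G),\overline{f}(G)]$, the definition of ``$\subseteq$'' then yields the desired inclusion
$$
\frac{(IR)\int_{a}^{b}|h(s)|f(g(s))\Delta s}{\int_{a}^{b}|h(s)|\Delta s}\supseteq f\Bigg(\frac{\int_{a}^{b}|h(s)|g(s)\Delta s}{\int_{a}^{b}|h(s)|\Delta s}\Bigg),
$$
which is the assertion of the theorem, the argument of $f$ on the right-hand side being the weighted mean $G$ of $g$. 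I do not anticipate a genuine obstacle: this is the proof of Theorem~\ref{thm4.6} with ``$\subseteq$'' replaced by ``$\supseteq$'' via the interchange of convexity types supplied by Theorem~\ref{thm4.1}(2); the only points requiring care are keeping track of the directions of the several inequalities and inclusions, and verifying that $G\in(c,d)$ so that $f(G)$ and all the quotients are well defined.
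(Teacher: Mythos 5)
Your proof is correct and is exactly the argument the paper intends: the authors omit the proof of Theorem~\ref{thm4.7}, remarking only that it follows ``similarly'' to Theorem~\ref{thm4.6}, and your transcription with the roles of convex and concave interchanged via Theorem~\ref{thm4.1}(2) is that similar argument. Note only that you have (rightly) proved the statement with $f\big(\int_a^b|h|g\,\Delta s/\int_a^b|h|\,\Delta s\big)$ on the right-hand side, silently correcting what is evidently a typographical error in the theorem as printed, where the argument of $f$ appears as $\int_a^b|h|\,\underline{f(g)}\,\Delta s/\int_a^b|h|\,\Delta s$.
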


\begin{theorem}
\label{thm4.8}
Let $g\in C_{rd}([a,b]_{\mathbb{T}},(c,d))$ and \\ 
$h\in C_{rd}([a,b]_{\mathbb{T}},\mathbb{R})$ with
$$
\int_{a}^{b}|h(s)|\Delta s>0.
$$
If $f\in C((c,d),\mathbb{R}^{+}_{\mathcal{I}})$ 
is an affine function, then
\begin{equation*}
\begin{split}
&\frac{(IR)\int_{a}^{b}|h(s)|f(g(s))\Delta s}{\int_{a}^{b}|h(s)|\Delta s}
= f\Bigg(\frac{\int_{a}^{b}|h(s)|
\underline{f(g)}(s)\Delta s}{\int_{a}^{b}|h(s)|\Delta s}\Bigg).
\end{split}
\end{equation*}
\end{theorem}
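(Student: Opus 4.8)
\textbf{Proof proposal for Theorem~\ref{thm4.8}.}
The plan is to reduce the affine case to the convex and concave cases already in hand, namely Theorem~\ref{thm4.6} and Theorem~\ref{thm4.7}. First I would note that, by Definition~\ref{defn4.1}, an affine function $f\in C((c,d),\mathbb{R}^{+}_{\mathcal{I}})$ is simultaneously convex and concave; equivalently, by Theorem~\ref{thm4.1}(3), both endpoint functions $\underline{f}$ and $\overline{f}$ lie in $SA((c,d),\mathbb{R})$, so each satisfies \eqref{1} with equality. Hence both Theorem~\ref{thm4.6} and Theorem~\ref{thm4.7} apply to $f$, giving the two set inclusions
\begin{equation*}
\frac{(IR)\int_{a}^{b}|h(s)|f(g(s))\Delta s}{\int_{a}^{b}|h(s)|\Delta s}
\subseteq f\Bigg(\frac{\int_{a}^{b}|h(s)|g(s)\Delta s}{\int_{a}^{b}|h(s)|\Delta s}\Bigg)
\end{equation*}
and the reverse inclusion. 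Combining the two inclusions yields the claimed equality.

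In carrying this out I would mirror the computation in the proof of Theorem~\ref{thm4.6}: since $f$ is affine its endpoints are continuous (Theorem~\ref{thm4.2}), so $|h|\underline{f(g)}$ and $|h|\overline{f(g)}$ are $\Delta$-integrable and the $IR$ $\Delta$-integral decomposes componentwise as $\big[\int_a^b|h|\underline{f(g)}\,\Delta s,\ \int_a^b|h|\overline{f(g)}\,\Delta s\big]$. Applying Theorem~\ref{thm4.5} to the convex-and-concave real function $\underline f$ gives the equality
\begin{equation*}
\underline{f}\Bigg(\frac{\int_{a}^{b}|h(s)|g(s)\Delta s}{\int_{a}^{b}|h(s)|\Delta s}\Bigg)
=\frac{\int_{a}^{b}|h(s)|\underline{f(g)}(s)\Delta s}{\int_{a}^{b}|h(s)|\Delta s},
\end{equation*}
and likewise for $\overline f$. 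Writing the $IR$ $\Delta$-integral as an interval and dividing by the positive scalar $\int_a^b|h(s)|\Delta s$ then identifies it, entry by entry, with the interval $f\big(\int_a^b|h|g\,\Delta s\,/\,\int_a^b|h|\,\Delta s\big)$, which is exactly the asserted identity (note that in the statement the argument of $f$ on the right can equivalently be written with $\underline{f(g)}$ or with $g$, since for affine $f$ the quotient $\int|h|\underline{f(g)}\,\Delta s/\int|h|\,\Delta s$ equals $\underline f$ of $\int|h|g\,\Delta s/\int|h|\,\Delta s$).

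I do not anticipate a genuine obstacle here: the result is essentially a formal consequence of antisymmetry of set inclusion together with the two preceding theorems, and the only thing to be careful about is bookkeeping — making sure the endpoint functions are continuous (hence $\Delta$-integrable) so that Theorem~\ref{thm4.4} and the componentwise description of the $IR$ $\Delta$-integral from Theorem~\ref{thm4.1}–Theorem~\ref{thm4.4} actually apply, and keeping the orientation of the inclusions straight when passing from $\underline f$ (convex-type inequality) to $\overline f$ (concave-type inequality). The mild subtlety worth a sentence in the write-up is the cosmetic discrepancy between the way the right-hand side is phrased in Theorems~\ref{thm4.7} and \ref{thm4.8} (argument $\int|h|\underline{f(g)}\,\Delta s/\int|h|\,\Delta s$) versus Theorem~\ref{thm4.6} (argument $\int|h|g\,\Delta s/\int|h|\,\Delta s$); for affine $f$ these two numbers coincide, so the equality can be stated in either form.
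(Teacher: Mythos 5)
Your strategy is the one the paper intends: the paper gives no proof of Theorem~\ref{thm4.8} at all, remarking only that it follows ``similarly'' to Theorem~\ref{thm4.6}, and your second route --- rerunning the proof of Theorem~\ref{thm4.6} with the equality case of Theorem~\ref{thm4.5} applied to the affine endpoint functions $\underline{f}$ and $\overline{f}$, then assembling the interval componentwise --- is exactly that argument and is correct.

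There is, however, one genuine error in your reconciliation of the two forms of the right-hand side, and it is not merely cosmetic. For affine $f$ you correctly observe that
\begin{equation*}
\frac{\int_{a}^{b}|h(s)|\underline{f(g)}(s)\Delta s}{\int_{a}^{b}|h(s)|\Delta s}
=\underline{f}\Bigg(\frac{\int_{a}^{b}|h(s)|g(s)\Delta s}{\int_{a}^{b}|h(s)|\Delta s}\Bigg),
\end{equation*}
but this identifies the argument printed in Theorems~\ref{thm4.7}--\ref{thm4.8} with $\underline{f}(A)$, not with $A$ itself, where $A=\int_{a}^{b}|h(s)|g(s)\Delta s/\int_{a}^{b}|h(s)|\Delta s$. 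Hence $f$ evaluated at the two quantities is \emph{not} the same unless $\underline{f}$ is the identity; indeed $\underline{f}(A)$ need not even lie in the domain $(c,d)$ of $f$, so the printed expression may be undefined. The correct reading is that the argument in Theorems~\ref{thm4.7} and \ref{thm4.8} is a typo for the argument $A$ appearing in Theorem~\ref{thm4.6}. This matters for your first route: taken literally, the inclusion from Theorem~\ref{thm4.6} and the reverse inclusion from Theorem~\ref{thm4.7} have different right-hand sides and do not combine by antisymmetry of $\subseteq$; that route only closes after the typo is corrected. Your direct second route is unaffected and suffices, so the proof stands once the spurious equivalence claim is deleted and the statement's right-hand side is read (or corrected) as $f(A)$.
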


\begin{theorem}[Agarwal et al. \cite{A14}]
\label{thm4.9}
\\Let $f,g,h\in C_{rd}([a,b]_{\mathbb{T}},(0,\infty))$. 
If $\frac{1}{p}+\frac{1}{q}=1$, with $p>1$, then
\begin{equation*}
\begin{split}
&\int_{a}^{b}h(s)f(s)g(s)\Delta s\\
&\leq \Bigg(\int_{a}^{b}h(s)f^{p}(s)\Delta s\Bigg)^{\frac{1}{p}}\Bigg(
\int_{a}^{b}h(s)g^{q}(s)\Delta s\Bigg)^{\frac{1}{q}}.
\end{split}
\end{equation*}
\end{theorem}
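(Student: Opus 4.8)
The plan is to reduce the inequality to the pointwise Young inequality and then integrate, exactly as in the classical case $\mathbb{T}=\mathbb{R}$; the point is that the $\Delta$-integral is linear and order-preserving, so the classical argument carries over verbatim. Recall Young's inequality: for $x,y\ge 0$ and conjugate exponents $p,q>1$ with $\frac1p+\frac1q=1$ one has $xy\le\frac{x^{p}}{p}+\frac{y^{q}}{q}$; this is immediate from the convexity of $\exp$, since for $x,y>0$ we may write $xy=\exp\!\big(\tfrac1p\log x^{p}+\tfrac1q\log y^{q}\big)\le\tfrac1p x^{p}+\tfrac1q y^{q}$, the cases $x=0$ or $y=0$ being trivial.

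First I would dispose of the degenerate case $a=b$, in which both sides vanish. For $a<b$, I introduce the normalizing constants
\begin{align*}
A&:=\Big(\int_{a}^{b}h(s)f^{p}(s)\,\Delta s\Big)^{1/p},\\
B&:=\Big(\int_{a}^{b}h(s)g^{q}(s)\,\Delta s\Big)^{1/q}.
\end{align*}
Since $h,f,g$ are $rd$-continuous, the products $hfg$, $hf^{p}$ and $hg^{q}$ are $rd$-continuous, hence $\Delta$-integrable (Theorem~5.19 of \cite{BP2}); and since they are strictly positive on $[a,b)_{\mathbb{T}}$, the positivity property of the $\Delta$-integral \cite{BP2} gives $A>0$ and $B>0$, so dividing by $A$ and $B$ below is legitimate.

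Next I would apply Young's inequality pointwise with $x=f(s)/A$ and $y=g(s)/B$, obtaining
$$
\frac{f(s)g(s)}{AB}\le\frac{1}{p}\,\frac{f^{p}(s)}{A^{p}}+\frac{1}{q}\,\frac{g^{q}(s)}{B^{q}},
\qquad s\in[a,b)_{\mathbb{T}}.
$$
Multiplying by $h(s)>0$, $\Delta$-integrating over $[a,b]_{\mathbb{T}}$, and using linearity and monotonicity of the $\Delta$-integral, this gives
\begin{equation*}
\begin{split}
&\frac{1}{AB}\int_{a}^{b}h(s)f(s)g(s)\,\Delta s
\le\frac{1}{pA^{p}}\int_{a}^{b}h(s)f^{p}(s)\,\Delta s\\
&\qquad\qquad+\frac{1}{qB^{q}}\int_{a}^{b}h(s)g^{q}(s)\,\Delta s
=\frac{1}{p}+\frac{1}{q}=1,
\end{split}
\end{equation*}
because $\int_{a}^{b}hf^{p}\,\Delta s=A^{p}$ and $\int_{a}^{b}hg^{q}\,\Delta s=B^{q}$. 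Multiplying through by $AB$ produces the asserted inequality.

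The only step where the time-scale setting could in principle cause trouble is the strict positivity $A,B>0$ used for the normalization; it holds because a nonnegative $rd$-continuous function that is positive at some point of $[a,b)_{\mathbb{T}}$ has a strictly positive $\Delta$-integral over $[a,b]_{\mathbb{T}}$ when $a<b$, so in the end there is no genuine obstacle. (One could avoid even this by replacing $A,B$ with $A_{\varepsilon}=(\int_{a}^{b}hf^{p}\,\Delta s+\varepsilon)^{1/p}$, $B_{\varepsilon}$ analogously, and letting $\varepsilon\to0^{+}$.) As an alternative route, the inequality can be deduced directly from the time-scale Jensen inequality already recorded in Theorem~\ref{thm4.5}: apply it with weight $w=hg^{q}$, inner function $\psi=fg^{1-q}$ (which is $rd$-continuous and positive), and convex $\varphi(u)=u^{p}$ on $(0,\infty)$; using $p+q=pq$ one checks $w\psi=hfg$ and $w\psi^{p}=hf^{p}$ pointwise, so Theorem~\ref{thm4.5} yields $\big(\int_{a}^{b}hfg\,\Delta s/\int_{a}^{b}hg^{q}\,\Delta s\big)^{p}\le\int_{a}^{b}hf^{p}\,\Delta s/\int_{a}^{b}hg^{q}\,\Delta s$, which rearranges at once, via $1-\tfrac1p=\tfrac1q$, to the claimed inequality.
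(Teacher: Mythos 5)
Your proof is correct, but note that the paper itself offers no proof of this statement: Theorem~\ref{thm4.9} is quoted from Agarwal et al.\ \cite{A14} as a known weighted H\"older inequality on time scales and is used as a lemma for the interval-valued version (Theorem~\ref{thm4.10}). Your main argument is the standard one: pointwise Young's inequality applied to $f(s)/A$ and $g(s)/B$, multiplication by $h(s)>0$, and the linearity and monotonicity of the $\Delta$-integral, with the normalization justified because a positive $rd$-continuous integrand over a nondegenerate $[a,b]_{\mathbb{T}}$ has a strictly positive $\Delta$-integral (and your $\varepsilon$-regularization sidesteps even that point). The alternative derivation from the time-scale Jensen inequality of Theorem~\ref{thm4.5}, with weight $hg^{q}$, inner function $fg^{1-q}$ and $\varphi(u)=u^{p}$, also checks out, since $p+q=pq$ gives $w\psi=hfg$ and $w\psi^{p}=hf^{p}$. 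Either route is a legitimate self-contained replacement for the citation.
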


Next we present a H\"{o}lder type inequality 
for interval-valued functions on time scales.

\begin{theorem}[H\"{o}lder's inequality]
\label{thm4.10}
\\Let $h\in C_{rd}([a,b]_{\mathbb{T}},(0,\infty))$, 
$f,g\in C_{rd}([a,b]_{\mathbb{T}},\mathbb{R}_{\mathcal{I}}^{+})$.\\
If $\frac{1}{p}+\frac{1}{q}=1$, with $p>1$, then
\begin{equation*}
\begin{split}
&\int_{a}^{b}h(s)f(s)g(s)\Delta s\\
&\leq \Bigg(\int_{a}^{b}h(s)f^{p}(s)\Delta s\Bigg)^{\frac{1}{p}}\Bigg(
\int_{a}^{b}h(s)g^{q}(s)\Delta s\Bigg)^{\frac{1}{q}}.
\end{split}
\end{equation*}
\end{theorem}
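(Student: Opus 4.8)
The plan is to reduce the interval-valued H\"older inequality to the scalar H\"older inequality on time scales (Theorem~\ref{thm4.9}) applied separately to the endpoint functions. First I would write $f(s)=[\underline{f}(s),\overline{f}(s)]$ and $g(s)=[\underline{g}(s),\overline{g}(s)]$, and recall that since $f,g\in C_{rd}([a,b]_{\mathbb{T}},\mathbb{R}_{\mathcal{I}}^{+})$, all four endpoint functions $\underline{f},\overline{f},\underline{g},\overline{g}$ lie in $C_{rd}([a,b]_{\mathbb{T}},(0,\infty))$, so the scalar results apply. Because all functions involved are positive, the interval products simplify: $f(s)g(s)=[\underline{f}(s)\underline{g}(s),\overline{f}(s)\overline{g}(s)]$, the power $f^{p}(s)=[\underline{f}^{p}(s),\overline{f}^{p}(s)]$, and similarly for $g^{q}$. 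One must also observe that for a positive interval $[u]=[\underline{u},\overline{u}]$ the root $[u]^{1/p}=[\underline{u}^{1/p},\overline{u}^{1/p}]$, and the product of two positive intervals multiplies endpoints componentwise; this is what lets the right-hand side be computed endpoint-by-endpoint.

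Next I would compute the left-hand side integral using the endpoint description of the $IR$ $\Delta$-integral: since $h$ is positive and continuous and the integrand $h\,f\,g$ is continuous and positive-interval-valued, Theorem~\ref{thm4} gives
\begin{equation*}
\int_{a}^{b}h(s)f(s)g(s)\Delta s
=\Bigg[\int_{a}^{b}h(s)\underline{f}(s)\underline{g}(s)\Delta s,\ \int_{a}^{b}h(s)\overline{f}(s)\overline{g}(s)\Delta s\Bigg].
\end{equation*}
On the right-hand side, again by Theorem~\ref{thm4},
\begin{equation*}
\Bigg(\int_{a}^{b}h(s)f^{p}(s)\Delta s\Bigg)^{\frac{1}{p}}
=\Bigg[\Bigg(\int_{a}^{b}h(s)\underline{f}^{p}(s)\Delta s\Bigg)^{\frac{1}{p}},\ \Bigg(\int_{a}^{b}h(s)\overline{f}^{p}(s)\Delta s\Bigg)^{\frac{1}{p}}\Bigg],
\end{equation*}
and analogously for the $g^{q}$ factor; multiplying these two positive intervals gives a right-hand interval whose left endpoint is $\big(\int h\underline{f}^{p}\big)^{1/p}\big(\int h\underline{g}^{q}\big)^{1/q}$ and whose right endpoint is $\big(\int h\overline{f}^{p}\big)^{1/p}\big(\int h\overline{g}^{q}\big)^{1/q}$.

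The inequality between intervals in the sense of the partial order ``$\leq$'' of Section~\ref{sec:2} then amounts to the two scalar inequalities
\begin{equation*}
\int_{a}^{b}h(s)\underline{f}(s)\underline{g}(s)\Delta s\leq\Bigg(\int_{a}^{b}h(s)\underline{f}^{p}(s)\Delta s\Bigg)^{\frac{1}{p}}\Bigg(\int_{a}^{b}h(s)\underline{g}^{q}(s)\Delta s\Bigg)^{\frac{1}{q}}
\end{equation*}
and the same with $\underline{f},\underline{g}$ replaced by $\overline{f},\overline{g}$. Each of these is exactly Theorem~\ref{thm4.9} applied to the positive $rd$-continuous triple $(h,\underline{f},\underline{g})$, respectively $(h,\overline{f},\overline{g})$, so the proof concludes.

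I expect the only genuine subtlety — not a deep obstacle, but the point that needs care — to be the bookkeeping around how products, powers, $p$-th roots, and integrals of \emph{positive} interval-valued functions all act coordinatewise on endpoints, so that both sides of the claimed inequality really do decompose into their lower and upper scalar parts; once that is made explicit, the result is an immediate consequence of the scalar time-scale H\"older inequality applied twice. A minor point worth stating is that positivity of $h,f,g$ guarantees the scalar integrals $\int_a^b h\underline{f}^p\,\Delta s$ etc.\ are finite and positive, so the roots are well defined and the right-hand interval is itself positive.
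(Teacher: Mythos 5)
Your proposal is correct and follows essentially the same route as the paper: decompose the positive interval-valued integrands endpoint-wise, compute each $IR$ $\Delta$-integral via its lower and upper scalar integrals, apply the scalar time-scale H\"older inequality (Theorem~\ref{thm4.9}) separately to $(h,\underline{f},\underline{g})$ and $(h,\overline{f},\overline{g})$, and reassemble the right-hand side using the endpoint-wise product and $p$-th root of positive intervals. The bookkeeping points you flag (positivity making products, powers, roots, and the partial order all act coordinatewise) are exactly the steps the paper's proof carries out.
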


\begin{proof}
By hypothesis, we have
\begin{equation*}
\begin{split}
&\int_{a}^{b}h(s)f(s)g(s)\Delta s\\
&=\int_{a}^{b}h(s)\big[\underline{f}(s)\underline{g}(s),
\overline{f}(s)\overline{g}(s)\big]\Delta s\\
&=\Bigg[\int_{a}^{b}h(s)\underline{f}(s)\underline{g}(s)\Delta s,
\int_{a}^{b}h(s)\overline{f}(s)\overline{g}(s)\Delta s\Bigg]\\
&\leq \Bigg[\bigg(\int_{a}^{b}h(s)\underline{f}^{p}(s)\Delta s
\bigg)^{\frac{1}{p}}\bigg(\int_{a}^{b}h(s)\underline{g}^{q}(s)\Delta s\bigg)^{\frac{1}{q}},\\
&\ \ \ \   \bigg(\int_{a}^{b}h(s)\overline{f}^{p}(s)
\Delta s\bigg)^{\frac{1}{p}}\bigg(\int_{a}^{b}h(s)
\overline{g}^{q}(s)\Delta s\bigg)^{\frac{1}{q}}\Bigg]\\
&= \Bigg[\bigg(\int_{a}^{b}h(s)\underline{f}^{p}(s)\Delta s
\bigg)^{\frac{1}{p}},\bigg(\int_{a}^{b}h(s)\overline{f}^{p}(s)\Delta s\bigg)^{\frac{1}{p}}\Bigg]\\
&\ \ \ \  \cdot\Bigg[\bigg(\int_{a}^{b}h(s)\underline{g}^{q}(s)\Delta s\bigg)^{\frac{1}{q}},\bigg(\int_{a}^{b}h(s)\overline{g}^{q}(s)\Delta s\bigg)^{\frac{1}{q}}\Bigg]\\
&= \Bigg[\int_{a}^{b}h(s)\underline{f}^{p}(s)\Delta s,\int_{a}^{b}h(s)
\overline{f}^{p}(s)\Delta s\Bigg]^{\frac{1}{p}}\\
&\ \ \ \  \cdot\Bigg[\int_{a}^{b}h(s)\underline{g}^{q}(s)\Delta s,
\int_{a}^{b}h(s)\overline{g}^{q}(s)\Delta s\Bigg]^{\frac{1}{q}}\\
&=\Bigg(\int_{a}^{b}h(s)\Big[\underline{f}(s),\overline{f}(s)\Big]^{p}\Delta s\Bigg)^{\frac{1}{p}}\Bigg(\int_{a}^{b}h(s)\Big[\underline{g}(s),
\overline{g}(s)\Big]^{q}\Delta s\Bigg)^{\frac{1}{q}}\\
&=\Bigg(\int_{a}^{b}h(s)f^{p}(s)\Delta s\Bigg)^{\frac{1}{p}}\Bigg(
\int_{a}^{b}h(s)g^{q}(s)\Delta s\Bigg)^{\frac{1}{q}}.
\end{split}
\end{equation*}
This concludes the proof.
\end{proof}

For the particular case $p=q=2$ in Theorem~\ref{thm4.10}, 
we obtain the following Cauchy--Schwarz inequality.

\begin{theorem}[Cauchy--Schwarz inequality]
\label{thm4.11}
\\Let $h\in C_{rd}([a,b]_{\mathbb{T}},(0,\infty))$,
$f,g\in C_{rd}([a,b]_{\mathbb{T}},\mathbb{R}_{\mathcal{I}}^{+})$. Then,
\begin{equation*}
\begin{split}
&\int_{a}^{b}h(s)f(s)g(s)\Delta s\\
&\leq \sqrt{\Bigg(\int_{a}^{b}h(s)f^{2}(s)\Delta s\Bigg)\Bigg(
\int_{a}^{b}h(s)g^{2}(s)\Delta s\Bigg)}.
\end{split}
\end{equation*}
\end{theorem}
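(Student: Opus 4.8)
The plan is to deduce the Cauchy--Schwarz inequality (Theorem~\ref{thm4.11}) as the special case $p=q=2$ of the interval H\"{o}lder inequality (Theorem~\ref{thm4.10}), which is already established. First I would observe that the pair of conjugate exponents $p=q=2$ satisfies $\frac{1}{p}+\frac{1}{q}=\frac{1}{2}+\frac{1}{2}=1$ with $p=2>1$, so Theorem~\ref{thm4.10} applies verbatim under the stated hypotheses $h\in C_{rd}([a,b]_{\mathbb{T}},(0,\infty))$ and $f,g\in C_{rd}([a,b]_{\mathbb{T}},\mathbb{R}_{\mathcal{I}}^{+})$.

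Next I would simply substitute $p=q=2$ into the conclusion of Theorem~\ref{thm4.10}, obtaining
\begin{equation*}
\begin{split}
&\int_{a}^{b}h(s)f(s)g(s)\Delta s\\
&\leq \Bigg(\int_{a}^{b}h(s)f^{2}(s)\Delta s\Bigg)^{\frac{1}{2}}\Bigg(
\int_{a}^{b}h(s)g^{2}(s)\Delta s\Bigg)^{\frac{1}{2}}.
\end{split}
\end{equation*}
Then I would rewrite the product of the two square roots on the right-hand side as a single square root of the product, using the fact that for positive intervals $[u],[v]\in\mathbb{R}_{\mathcal{I}}^{+}$ one has $[u]^{1/2}[v]^{1/2}=([u][v])^{1/2}$, which follows from the componentwise definitions of the operations $[u]\cdot[v]=[\underline{u}\,\underline{v},\overline{u}\,\overline{v}]$ and $\lambda$-th powers on positive intervals. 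The two integrals involved are themselves positive intervals, since $h>0$ and $f^{2},g^{2}$ are positive interval-valued functions, so the rewriting is legitimate. This yields exactly the claimed form with $\sqrt{\,\cdot\,}$ enclosing the product of the two integrals, completing the proof.

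I do not anticipate a genuine obstacle here, since the statement is a direct specialization. The only point requiring a moment's care is the manipulation of interval powers and products on the right-hand side --- one must verify that on positive intervals the map $[u]\mapsto[u]^{1/2}$ is multiplicative, i.e., that $([u][v])^{1/2}=[u]^{1/2}[v]^{1/2}$, so that the two factors can be merged under one radical; this is immediate from the endpoint formulas already recorded in Section~\ref{sec:2}. Everything else is a verbatim substitution into Theorem~\ref{thm4.10}.
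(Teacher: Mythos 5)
Your proposal matches the paper exactly: the authors obtain Theorem~\ref{thm4.11} precisely as the case $p=q=2$ of Theorem~\ref{thm4.10}, stating this in one line without further comment. Your additional remark that $[u]^{1/2}[v]^{1/2}=([u][v])^{1/2}$ for positive intervals (so the two factors can be merged under a single radical) is a correct and worthwhile detail that the paper leaves implicit.
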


\begin{example}
\label{ex6}
Suppose that $[a,b]_{\mathbb{T}}=[0,\frac{\pi}{2}]$. Let
$h(s)=s$, $f(s)=[s,s+1]$, and
$g(s)=[\sin s,s]$ for $s\in[0,\frac{\pi}{2}]$.
Then
\begin{equation*}
\begin{split}
\int_{a}^{b}&h(s)f(s)g(s)\Delta s\\
&=\int_{0}^{\frac{\pi}{2}}\big[s^{2}\sin s,s^{3}+s^{2}\big]\Delta s\\
&=\bigg[\int_{0}^{\frac{\pi}{2}}s^{2}\sin s\Delta s,
\int_{0}^{\frac{\pi}{2}}(s^{3}+s^{2})\Delta s\bigg]\\
&=\bigg[\pi-2,\frac{\pi^{4}}{64}+\frac{\pi^{3}}{24}\bigg],
\end{split}
\end{equation*}
and
\begin{equation*}
\begin{split}
&\sqrt{\Bigg(\int_{a}^{b}h(s)f^{2}(s)\Delta s\Bigg)\Bigg(\int_{a}^{b}h(s)g^{2}(s)\Delta s\Bigg)}\\
&=\sqrt{\bigg(\int_{0}^{\frac{\pi}{2}}\big[s^{3},s^{3}+2s^{2}+s\big]\Delta s
\bigg)\bigg(\int_{0}^{\frac{\pi}{2}}\big[s\sin ^{2}s,s^{3}\big]\Delta s\bigg)}\\
&=\sqrt{\bigg[\int_{0}^{\frac{\pi}{2}}s^{3}ds,\int_{0}^{\frac{\pi}{2}}(s^{3}+2s^{2}
+s)ds\bigg]\cdot \bigg[\int_{0}^{\frac{\pi}{2}}s\sin ^{2}sds,\int_{0}^{\frac{\pi}{2}}s^{3}ds\bigg]}\\
&=\sqrt{\bigg[\frac{\pi^{4}}{64},\frac{\pi^{4}}{64}+\frac{\pi^{3}}{12}
+\frac{\pi^{2}}{8}\bigg]\cdot\bigg[\frac{\pi^{2}}{16}+\frac{1}{4},\frac{\pi^{4}}{64}\bigg]}\\
&=\sqrt{\bigg[\frac{\pi^{6}}{1024}+\frac{\pi^{4}}{256},\frac{\pi^{8}}{4096}
+\frac{\pi^{7}}{768}+\frac{\pi^{6}}{512}\bigg]}\\
&=\Bigg[\sqrt{\frac{\pi^{6}}{1024}+\frac{\pi^{4}}{256}},\sqrt{\frac{\pi^{8}}{4096}
+\frac{\pi^{7}}{768}+\frac{\pi^{6}}{512}}\Bigg].
\end{split}
\end{equation*}

Consequently, we obtain
\begin{equation*}
\begin{split}
&\bigg[\pi-2,\frac{\pi^{4}}{64}+\frac{\pi^{3}}{24}\bigg]\\
&\leq \Bigg[\sqrt{\frac{\pi^{6}}{1024}+\frac{\pi^{4}}{256}},
\sqrt{\frac{\pi^{8}}{4096}+\frac{\pi^{7}}{768}+\frac{\pi^{6}}{512}}\Bigg].
\end{split}
\end{equation*}
\end{example}

\begin{example}
\label{ex7}
Suppose that $[a,b]_{\mathbb{T}}=\{0,1,2,3\}$. Let
$h(s)=s$, $f(s)=[s,s+1]$, and
$g(s)=[\frac{s}{2},s]$ for $s\in\{0,1,2,3\}$.
Then
\begin{equation*}
\begin{split}
\int_{a}^{b}&h(s)f(s)g(s)\Delta s\\
&=\int_{0}^{3}\Big[\frac{s^{3}}{2},s^{3}+s^{2}\Big]\Delta s\\
&=\bigg[\int_{0}^{3}\frac{s^{3}}{2}\Delta s,\int_{0}^{3}s^{3}+s^{2}\Delta s\bigg]\\
&=\bigg[\frac{9}{2},14\bigg],
\end{split}
\end{equation*}
and
\begin{equation*}
\begin{split}
&\sqrt{\Bigg(\int_{a}^{b}h(s)f^{2}(s)\Delta s\Bigg)\Bigg(
\int_{a}^{b}h(s)g^{2}(s)\Delta s\Bigg)}\\
&=\sqrt{\bigg(\int_{0}^{3}\big[s^{3},s^{3}+2s^{2}+s\big]\Delta s\bigg)
\bigg(\int_{0}^{3}\Big[\frac{s^{3}}{4},s^{3}\Big]\Delta s\bigg)}\\
&=\sqrt{[9,22]\cdot\Big[\frac{9}{4},9\Big]}\\
&=\bigg[\frac{9}{2},3\sqrt{22}\bigg].
\end{split}
\end{equation*}
Consequently, we obtain
$$
\bigg[\frac{9}{2},14\bigg]\leq \bigg[\frac{9}{2},3\sqrt{22}\bigg].
$$
\end{example}

\begin{theorem}[Agarwal et al.\cite{A14}; Wong et al.\cite{W05}]
\label{thm4.12}
Let $f,g,h\in C_{rd}([a,b]_{\mathbb{T}},\mathbb{R})$ and  $p>1$. Then,
\begin{equation*}
\begin{split}
&\bigg(\int_{a}^{b}|h(s)||f(s)+g(s)|^{p}\Delta s\bigg)^{\frac{1}{p}}\\
&\leq \bigg(\int_{a}^{b}|h(s)||f(s)|^{p}\Delta s\bigg)^{\frac{1}{p}}
+\bigg(\int_{a}^{b}|h(s)||g(s)|^{p}\Delta s\bigg)^{\frac{1}{p}}.
\end{split}
\end{equation*}
\end{theorem}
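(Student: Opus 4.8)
The plan is to deduce Theorem~\ref{thm4.12} from H\"older's inequality on time scales (Theorem~\ref{thm4.9}) via the classical Minkowski splitting trick. Set $q=\tfrac{p}{p-1}$, so that $\tfrac1p+\tfrac1q=1$ and $(p-1)q=p$. If $\int_{a}^{b}|h(s)||f(s)+g(s)|^{p}\Delta s=0$ the inequality holds trivially, since its left-hand side then vanishes while its right-hand side is nonnegative; so I would assume from now on that this integral is strictly positive.

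First I would use the pointwise estimates $|f(s)+g(s)|\le|f(s)|+|g(s)|$ and $|f(s)+g(s)|^{p}=|f(s)+g(s)|\cdot|f(s)+g(s)|^{p-1}$ (here $p-1>0$ is used) to obtain, for every $s\in[a,b]_{\mathbb{T}}$,
\begin{equation*}
\begin{split}
&|h(s)|\,|f(s)+g(s)|^{p}\\
&\le |h(s)|\,|f(s)|\,|f(s)+g(s)|^{p-1}+|h(s)|\,|g(s)|\,|f(s)+g(s)|^{p-1}.
\end{split}
\end{equation*}
All functions appearing here are $rd$-continuous (hence $\Delta$-integrable) because $f,g,h\in C_{rd}([a,b]_{\mathbb{T}},\mathbb{R})$, so integrating over $[a,b]_{\mathbb{T}}$ preserves the inequality.

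Next I would apply Theorem~\ref{thm4.9} to each of the two integrals on the right, with the three positive factors chosen as $|h|$, $|f|$ (respectively $|g|$), and $|f+g|^{p-1}$, and exponents $p$ and $q$. Since $(p-1)q=p$, this gives
\begin{equation*}
\begin{split}
&\int_{a}^{b}|h(s)|\,|f(s)+g(s)|^{p}\Delta s\\
&\le\Bigg[\bigg(\int_{a}^{b}|h(s)|\,|f(s)|^{p}\Delta s\bigg)^{\frac{1}{p}}+\bigg(\int_{a}^{b}|h(s)|\,|g(s)|^{p}\Delta s\bigg)^{\frac{1}{p}}\Bigg]\\
&\quad\times\bigg(\int_{a}^{b}|h(s)|\,|f(s)+g(s)|^{p}\Delta s\bigg)^{\frac{1}{q}}.
\end{split}
\end{equation*}
Dividing both sides by $\big(\int_{a}^{b}|h(s)|\,|f(s)+g(s)|^{p}\Delta s\big)^{1/q}$, which is legitimate by the reduction above, and using $1-\tfrac1q=\tfrac1p$, yields exactly the asserted inequality.

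The hard part will be a hypothesis mismatch rather than anything analytically deep: Theorem~\ref{thm4.9} as stated requires its three functions to be \emph{strictly} positive, whereas $|h|$, $|f|$, $|g|$, and $|f+g|^{p-1}$ may vanish at points of $[a,b]_{\mathbb{T}}$. I would dispose of this at the outset by noting that Theorem~\ref{thm4.9} extends verbatim to nonnegative $rd$-continuous functions: one applies the strictly positive case to $u+\varepsilon$, $v+\varepsilon$, $w+\varepsilon$ in place of nonnegative $u,v,w$ and lets $\varepsilon\to0^{+}$, which is legitimate because on the compact interval $[a,b]_{\mathbb{T}}$ these perturbations are uniform and the $\Delta$-integral is continuous under uniform convergence of the integrand. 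With this nonnegative version of H\"older's inequality in hand, the two applications above are fully justified and the proof is complete.
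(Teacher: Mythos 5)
The paper does not actually prove Theorem~\ref{thm4.12}: it is quoted as a known result from the literature (Agarwal et al.\ \cite{A14}; Wong et al.\ \cite{W05}) and used only as an ingredient in the proof of the interval-valued Minkowski inequality, Theorem~\ref{thm4.13}. Your proposal supplies a self-contained derivation from the time-scale H\"older inequality (Theorem~\ref{thm4.9}) via the classical splitting $|f+g|^{p}\le(|f|+|g|)\,|f+g|^{p-1}$ together with $(p-1)q=p$, and it is correct: the degenerate case $\int_{a}^{b}|h(s)||f(s)+g(s)|^{p}\Delta s=0$ is disposed of first, so the final division by $\big(\int_{a}^{b}|h(s)||f(s)+g(s)|^{p}\Delta s\big)^{1/q}$ is legitimate; all the auxiliary integrands ($|f+g|^{p-1}$ in particular) are $rd$-continuous, hence $\Delta$-integrable; and you correctly flag and repair the only real hypothesis mismatch, namely that Theorem~\ref{thm4.9} is stated for functions with values in $(0,\infty)$ while $|h|$, $|f|$, $|g|$, $|f+g|^{p-1}$ may vanish --- the $\varepsilon$-perturbation works because $rd$-continuous functions on the compact interval $[a,b]_{\mathbb{T}}$ are bounded, the perturbed integrands converge uniformly, and the $\Delta$-integral is continuous under uniform convergence. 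This is the standard textbook argument one finds in the cited sources, so relative to the paper the only difference is that your version makes the result self-contained rather than imported.
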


By the same technique used in the proof of Theorem~4 
in \cite{R16}, we get a more general result.

\begin{theorem}[Minkowski's inequality]
\label{thm4.13}
\\Let $h\in C_{rd}([a,b]_{\mathbb{T}},\mathbb{R})$, 
$f,g\in C([a,b]_{\mathbb{T}},\mathbb{R}_{\mathcal{I}}^{+})$ 
and \\$p>1$. Then,
\begin{equation*}
\begin{split}
&\bigg(\int_{a}^{b}|h(s)|(f(s)+g(s))^{p}\Delta s\bigg)^{\frac{1}{p}}\\
&\leq \bigg(\int_{a}^{b}|h(s)|f^{p}(s)\Delta s\bigg)^{\frac{1}{p}}
+\bigg(\int_{a}^{b}|h(s)|g^{p}(s)\Delta s\bigg)^{\frac{1}{p}}.
\end{split}
\end{equation*}
\end{theorem}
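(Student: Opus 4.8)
The plan is to reduce the interval-valued Minkowski inequality to the scalar Minkowski inequality on time scales (Theorem~\ref{thm4.12}) applied separately to the endpoint functions, exactly as was done for H\"older's inequality in Theorem~\ref{thm4.10}. First I would write $f(s)=[\underline{f}(s),\overline{f}(s)]$ and $g(s)=[\underline{g}(s),\overline{g}(s)]$; since $f,g\in C([a,b]_{\mathbb{T}},\mathbb{R}_{\mathcal{I}}^{+})$, the endpoint functions are continuous, nonnegative, and $rd$-continuous, so all the scalar integrals below exist. Because addition of positive intervals acts componentwise, $f(s)+g(s)=[\underline{f}(s)+\underline{g}(s),\overline{f}(s)+\overline{g}(s)]$, and for a positive interval the $p$-th power also acts componentwise, so $(f(s)+g(s))^{p}=\big[(\underline{f}(s)+\underline{g}(s))^{p},(\overline{f}(s)+\overline{g}(s))^{p}\big]$.

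Next I would apply Theorem~\ref{thm4.4}/Theorem~\ref{thm4} to pull the $\Delta$-integral inside the brackets, obtaining
\begin{equation*}
\begin{split}
&\int_{a}^{b}|h(s)|(f(s)+g(s))^{p}\Delta s\\
&=\Bigg[\int_{a}^{b}|h(s)|(\underline{f}(s)+\underline{g}(s))^{p}\Delta s,
\int_{a}^{b}|h(s)|(\overline{f}(s)+\overline{g}(s))^{p}\Delta s\Bigg].
\end{split}
\end{equation*}
Taking the (increasing, hence order- and inclusion-compatible) map $[x,y]\mapsto[x^{1/p},y^{1/p}]$ on positive intervals, and invoking Theorem~\ref{thm4.12} with $(f,g,h)$ replaced by $(\underline{f},\underline{g},h)$ for the left endpoint and by $(\overline{f},\overline{g},h)$ for the right endpoint, each endpoint of $\big(\int_{a}^{b}|h(s)|(f(s)+g(s))^{p}\Delta s\big)^{1/p}$ is bounded by the corresponding endpoint of the sum on the right-hand side. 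Then I would reassemble: the right-hand side equals
\begin{equation*}
\begin{split}
&\Bigg[\bigg(\int_{a}^{b}|h(s)|\underline{f}^{p}(s)\Delta s\bigg)^{\frac{1}{p}}
+\bigg(\int_{a}^{b}|h(s)|\underline{g}^{p}(s)\Delta s\bigg)^{\frac{1}{p}},\\
&\ \ \bigg(\int_{a}^{b}|h(s)|\overline{f}^{p}(s)\Delta s\bigg)^{\frac{1}{p}}
+\bigg(\int_{a}^{b}|h(s)|\overline{g}^{p}(s)\Delta s\bigg)^{\frac{1}{p}}\Bigg],
\end{split}
\end{equation*}
which, by componentwise addition of positive intervals and the componentwise action of $(\cdot)^{1/p}$, is precisely $\big(\int_{a}^{b}|h(s)|f^{p}(s)\Delta s\big)^{1/p}+\big(\int_{a}^{b}|h(s)|g^{p}(s)\Delta s\big)^{1/p}$. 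Comparing endpoints via the partial order ``$\leq$'' on $\mathbb{R}_{\mathcal{I}}$ gives the claimed inequality.

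I do not expect a genuine obstacle here; the only points requiring a little care are bookkeeping ones. One must check that all the interval arithmetic — sum, $p$-th power, $p$-th root — really does act componentwise, which is valid precisely because $f,g$ take values in $\mathbb{R}_{\mathcal{I}}^{+}$ (so no sign-reversal or $\min/\max$ reshuffling occurs), and that the scalar quantities $\int_{a}^{b}|h(s)|\underline{f}^{p}(s)\Delta s$ etc.\ are finite, which follows from continuity on the compact time-scale interval $[a,b]_{\mathbb{T}}$ together with Theorem~5.19 of \cite{BP2}. Modulo these routine verifications, the result is an immediate consequence of applying the scalar Minkowski inequality of Theorem~\ref{thm4.12} to each endpoint, in complete parallel with the proof of Theorem~\ref{thm4.10}.
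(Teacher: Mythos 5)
Your proposal is correct and follows essentially the same route as the paper's own proof: decompose $(f+g)^{p}$ componentwise using positivity, pull the $\Delta$-integral inside the brackets, apply the scalar Minkowski inequality of Theorem~\ref{thm4.12} to each endpoint, and reassemble via the componentwise $p$-th root and the partial order on $\mathbb{R}_{\mathcal{I}}$. No substantive differences to report.
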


\begin{proof}
By hypothesis, we have
\begin{equation*}
\begin{split}
&\bigg(\int_{a}^{b}|h(s)|(f(s)+g(s))^{p}\Delta s\bigg)^{\frac{1}{p}}\\
&=\bigg(\int_{a}^{b}|h(s)|\big[\underline{f}(s)+\underline{g}(s),
\overline{f}(s)+\overline{g}(s)\big]^{p}\Delta s\bigg)^{\frac{1}{p}}\\
&=\bigg(\int_{a}^{b}|h(s)|\big[(\underline{f}(s)
+\underline{g}(s))^{p},(\overline{f}(s)+\overline{g}(s))^{p}\big]\Delta s\bigg)^{\frac{1}{p}}\\
&=\Bigg[\bigg(\int_{a}^{b}|h(s)|(\underline{f}(s)+\underline{g}(s))^{p}\Delta s\bigg)^{\frac{1}{p}},\\
&\ \ \ \ \ \ \ \ \ \bigg(\int_{a}^{b}|h(s)|(\overline{f}(s)
+\overline{g}(s))^{p}\Delta s\bigg)^{\frac{1}{p}}\Bigg]\\
&\leq \Bigg[\bigg(\int_{a}^{b}|h(s)|\underline{f}^{p}(s)\Delta s\bigg)^{\frac{1}{p}}
+\bigg(\int_{a}^{b}|h(s)|\underline{g}^{p}(s)\Delta s\bigg)^{\frac{1}{p}},\\
&\ \ \ \bigg(\int_{a}^{b}|h(s)|\overline{f}^{p}(s)\Delta s\bigg)^{\frac{1}{p}}
+\bigg(\int_{a}^{b}|h(s)|\overline{g}^{p}(s)\Delta s\bigg)^{\frac{1}{p}}\Bigg]\\
&= \Bigg[\bigg(\int_{a}^{b}|h(s)|\underline{f}^{p}(s)\Delta s\bigg)^{\frac{1}{p}},
\bigg(\int_{a}^{b}|h(s)|\overline{f}^{p}(s)\Delta s\bigg)^{\frac{1}{p}}\Bigg]\\
&\ \  +\Bigg[\bigg(\int_{a}^{b}|h(s)|\underline{g}^{p}(s)\Delta s\bigg)^{\frac{1}{p}},
\bigg(\int_{a}^{b}|h(s)|\overline{g}^{p}(s)\Delta s\bigg)^{\frac{1}{p}}\Bigg]\\
&= \Bigg[\int_{a}^{b}|h(s)|\underline{f}^{p}(s)\Delta s,\int_{a}^{b}|h(s)|
\overline{f}^{p}(s)\Delta s\Bigg]^{\frac{1}{p}}\\
&\ \ \ \ \ \ +\Bigg[\int_{a}^{b}|h(s)|\underline{g}^{p}(s)\Delta s,
\int_{a}^{b}|h(s)|\overline{g}^{p}(s)\Delta s\Bigg]^{\frac{1}{p}}\\
&=\bigg(\int_{a}^{b}|h(s)|\big[\underline{f}(s),
\overline{f}(s)\big]^{p}\Delta s\bigg)^{\frac{1}{p}}\\
&\ \ \ \ \ +\bigg(\int_{a}^{b}|h(s)|\big[\underline{g}(s),
\overline{g}(s)\big]^{p}\Delta s\bigg)^{\frac{1}{p}}\\
&=\bigg(\int_{a}^{b}|h(s)|f^{p}(s)\Delta s\bigg)^{\frac{1}{p}}
+\bigg(\int_{a}^{b}|h(s)|g^{p}(s)\Delta s\bigg)^{\frac{1}{p}}.
\end{split}
\end{equation*}
The proof is complete.
\end{proof}

\begin{example}
\label{ex8}
Suppose that $[a,b]_{\mathbb{T}}=[0,1]\cup \{2\}$. 
Let\\ $h(s)=s$, $f(s)=[s,2s]$,
$g(s)=[s,e^{s}]$ and $p=2$.
Then,
\begin{equation*}
\begin{split}
&\bigg(\int_{a}^{b}|h(s)|(f(s)+g(s))^{p}\Delta s\bigg)^{\frac{1}{p}}\\
&=\sqrt{\int_{0}^{2}\big[4s^{3},se^{2s}+4s^{2}e^{s}+4s^{3}\big]\Delta s}\\
&=\sqrt{\bigg[\int_{0}^{2}4s^{3}\Delta s,\int_{0}^{2}se^{2s}+4s^{2}e^{s}+4s^{3}\Delta s\bigg]}\\
&=\Bigg[\sqrt{5},\frac{\sqrt{5e^{2}+32e-11}}{2}\Bigg],
\end{split}
\end{equation*}
and
\begin{equation*}
\begin{split}
&\bigg(\int_{a}^{b}|h(s)|f^{p}(s)\Delta s\bigg)^{\frac{1}{p}}
+\bigg(\int_{a}^{b}|h(s)|g(^{p}s)\Delta s\bigg)^{\frac{1}{p}}\\
&=\sqrt{\int_{0}^{2}\big[s^{3},4s^{3}\big]\Delta s}
+\sqrt{\int_{0}^{2}\big[s^{3},se^{2s}\big]\Delta s}\\
&=\bigg[\frac{\sqrt{5}}{2},\sqrt{5}\bigg]+\bigg[
\frac{\sqrt{5}}{2},\frac{\sqrt{5e^{2}+1}}{2}\bigg]\\
&=\bigg[\sqrt{5},\sqrt{5}+\frac{\sqrt{5e^{2}+1}}{2}\bigg].
\end{split}
\end{equation*}
Consequently, we obtain
$$
\Bigg[\sqrt{5},\frac{\sqrt{5e^{2}+32e-11}}{2}\Bigg]
\leq \bigg[\sqrt{5},\sqrt{5}+\frac{\sqrt{5e^{2}+1}}{2}\bigg].
$$
\end{example}

The next results follow directly from 
Theorems~\ref{thm4.10} and \ref{thm4.13}, respectively.

\begin{corollary}
\label{cor4.1}
Let $h\in C_{rd}([a,b]_{\mathbb{T}},(0,\infty))$, and\\
$f,g\in C_{rd}([a,b]_{\mathbb{T}},\mathbb{R}_{\mathcal{I}}^{-})$. 
If $\frac{1}{p}+\frac{1}{q}=1$, with $p>1$, then
\begin{equation*}
\begin{split}
&\int_{a}^{b}h(s)f(s)g(s)\Delta s\\
&\leq \Bigg(\int_{a}^{b}h(s)(-f)^{p}(s)\Delta s\Bigg)^{\frac{1}{p}}\Bigg(
\int_{a}^{b}h(s)(-g)^{q}(s)\Delta s\Bigg)^{\frac{1}{q}}.
\end{split}
\end{equation*}
\end{corollary}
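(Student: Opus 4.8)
The plan is to derive the inequality directly from H\"older's inequality for positive interval-valued functions (Theorem~\ref{thm4.10}) by a sign change. Since $f(s),g(s)\in\mathbb{R}^{-}_{\mathcal{I}}$ for every $s\in[a,b]_{\mathbb{T}}$, the functions $-f$ and $-g$ take values in $\mathbb{R}^{+}_{\mathcal{I}}$, and they remain $rd$-continuous because negation is continuous on $(\mathbb{R}_{\mathcal{I}},d)$; thus $-f,-g\in C_{rd}([a,b]_{\mathbb{T}},\mathbb{R}^{+}_{\mathcal{I}})$, while $h\in C_{rd}([a,b]_{\mathbb{T}},(0,\infty))$ is unchanged, so the hypotheses of Theorem~\ref{thm4.10} are satisfied for the pair $(-f,-g)$.

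The key observation is the pointwise identity $f(s)g(s)=(-f)(s)\cdot(-g)(s)$. Indeed, for two negative intervals $[\underline u,\overline u]$ and $[\underline v,\overline v]$ (so $\overline u<0$ and $\overline v<0$), the definition of interval multiplication gives $[\underline u,\overline u]\cdot[\underline v,\overline v]=[\overline u\,\overline v,\underline u\,\underline v]$, while $[-\overline u,-\underline u]\cdot[-\overline v,-\underline v]=[(-\overline u)(-\overline v),(-\underline u)(-\underline v)]=[\overline u\,\overline v,\underline u\,\underline v]$, so the two products coincide. Applying this with $[\underline u,\overline u]=f(s)$ and $[\underline v,\overline v]=g(s)$ and integrating against $h$ yields
\[
\int_{a}^{b}h(s)f(s)g(s)\Delta s=\int_{a}^{b}h(s)(-f)(s)(-g)(s)\Delta s.
\]

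Next I would invoke Theorem~\ref{thm4.10} with $-f$ and $-g$ in place of $f$ and $g$, obtaining
\[
\int_{a}^{b}h(s)(-f)(s)(-g)(s)\Delta s\leq\Bigg(\int_{a}^{b}h(s)(-f)^{p}(s)\Delta s\Bigg)^{\frac{1}{p}}\Bigg(\int_{a}^{b}h(s)(-g)^{q}(s)\Delta s\Bigg)^{\frac{1}{q}},
\]
and combining the last two displays gives the assertion. The only point requiring care is the sign bookkeeping in the interval arithmetic: checking the product identity above and confirming that for a positive interval the $p$-th (resp.\ $q$-th) power acts endpointwise, so that $(-f)^{p}$ and $(-g)^{q}$ are again positive interval-valued and the right-hand side is well defined. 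This is routine, and there is no analytic obstacle beyond what is already contained in Theorem~\ref{thm4.10}.
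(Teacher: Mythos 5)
Your proof is correct and is exactly the argument the paper intends: the paper offers no written proof of Corollary~\ref{cor4.1}, stating only that it ``follows directly from Theorem~\ref{thm4.10}'', and your reduction via $-f,-g\in C_{rd}([a,b]_{\mathbb{T}},\mathbb{R}^{+}_{\mathcal{I}})$ together with the pointwise identity $f(s)g(s)=(-f)(s)\cdot(-g)(s)$ for negative intervals is the right way to make that precise. The interval-arithmetic checks you flag (product of two negative intervals, endpointwise powers of positive intervals) are carried out correctly.
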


\begin{corollary}
\label{cor4.2}
Let $h\in C_{rd}([a,b]_{\mathbb{T}},\mathbb{R})$, 
$f,g\in C([a,b]_{\mathbb{T}},\mathbb{R}_{\mathcal{I}}^{-})$ 
and $p\in2^{\mathbb{N}}$. Then,
\begin{multline*}
\bigg(\int_{a}^{b}|h(s)|(f(s)+g(s))^{p}\Delta s\bigg)^{\frac{1}{p}}\\
\leq \bigg(\int_{a}^{b}|h(s)|f^{p}(s)\Delta s\bigg)^{\frac{1}{p}}
+\bigg(\int_{a}^{b}|h(s)|g^{p}(s)\Delta s\bigg)^{\frac{1}{p}}.
\end{multline*}
\end{corollary}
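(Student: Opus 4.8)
The plan is to reduce Corollary~\ref{cor4.2} to the Minkowski inequality of Theorem~\ref{thm4.13} by replacing the negative interval-valued functions $f,g$ by their negatives, which are positive. First I would observe that if $f\in C([a,b]_{\mathbb{T}},\mathbb{R}_{\mathcal{I}}^{-})$ then, taking $\lambda=-1$ in the scalar-multiplication rule, the function $-f$ given by $(-f)(t)=[-\overline{f}(t),-\underline{f}(t)]$ satisfies $-f\in C([a,b]_{\mathbb{T}},\mathbb{R}_{\mathcal{I}}^{+})$: positivity is immediate from $\overline{f}(t)<0$, and continuity follows from $d(-[u],-[v])=d([u],[v])$. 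Likewise $-g\in C([a,b]_{\mathbb{T}},\mathbb{R}_{\mathcal{I}}^{+})$. Since moreover $h\in C_{rd}([a,b]_{\mathbb{T}},\mathbb{R})$ and $p\in 2^{\mathbb{N}}$ forces $p\geq 2>1$, the data $(h,-f,-g)$ with exponent $p$ satisfy all the hypotheses of Theorem~\ref{thm4.13}.

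Next I would record the two interval-arithmetic facts that let one translate the conclusion of Theorem~\ref{thm4.13} back to $f,g$. The first is that multiplication by $-1$ distributes over interval addition, $-([u]+[v])=(-[u])+(-[v])$, which is immediate from the definitions; hence $-(f+g)=(-f)+(-g)$. The second, which is where evenness of $p$ enters, is that $(-[u])^{p}=[u]^{p}$ for every interval $[u]$ whenever $p\in 2^{\mathbb{N}}$: for a positive interval both sides equal $[\underline{u}^{p},\overline{u}^{p}]$, and the general case follows by induction on the exponent, since squaring turns an arbitrary interval into a positive one and $[u]^{2^{k+1}}=\big([u]^{2^{k}}\big)^{2}$. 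Consequently $f^{p}=(-f)^{p}$, $g^{p}=(-g)^{p}$ and $(f+g)^{p}=\big(-(f+g)\big)^{p}=\big((-f)+(-g)\big)^{p}$ are precisely the positive intervals produced by Theorem~\ref{thm4.13}.

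With these preparations, applying Theorem~\ref{thm4.13} to $-f,-g,h,p$ gives
$$\bigg(\int_{a}^{b}|h(s)|\big((-f)(s)+(-g)(s)\big)^{p}\Delta s\bigg)^{\frac{1}{p}}\leq \bigg(\int_{a}^{b}|h(s)|(-f)^{p}(s)\Delta s\bigg)^{\frac{1}{p}}+\bigg(\int_{a}^{b}|h(s)|(-g)^{p}(s)\Delta s\bigg)^{\frac{1}{p}},$$
and substituting $(-f)+(-g)=-(f+g)$ and then $\big(-(f+g)\big)^{p}=(f+g)^{p}$, $(-f)^{p}=f^{p}$, $(-g)^{p}=g^{p}$ yields exactly the asserted inequality. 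The only genuinely delicate point is the second identity above: one must make sure the paper's convention for the $p$-th power of a negative interval coincides with $(-[u])^{p}$, so that $f^{p},g^{p},(f+g)^{p}$ match the positive intervals coming out of Theorem~\ref{thm4.13}; once that is pinned down, the argument is pure bookkeeping. Corollary~\ref{cor4.1} is obtained in the same way from the H\"older inequality of Theorem~\ref{thm4.10} --- there the exponents $p,q$ need not be even, which is why $(-f)^{p}$ and $(-g)^{q}$ are kept explicit in its statement --- and this is presumably why the two corollaries are grouped together.
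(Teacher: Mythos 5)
Your proposal is correct and matches the paper's intent: the paper offers no written proof, asserting only that Corollary~\ref{cor4.2} ``follows directly'' from Theorem~\ref{thm4.13}, and the reduction you give --- passing to $-f,-g\in C([a,b]_{\mathbb{T}},\mathbb{R}_{\mathcal{I}}^{+})$, applying Minkowski, and undoing the sign change via $-(f+g)=(-f)+(-g)$ and $(-[u])^{p}=[u]^{p}$ for $p\in 2^{\mathbb{N}}$ --- is precisely the argument that assertion presupposes. Your flagged caveat about the convention for powers of negative intervals is well taken but harmless here, since iterated interval multiplication gives $[u]\cdot[u]=[\overline{u}^{2},\underline{u}^{2}]=(-[u])^{2}$ for negative $[u]$, so the identity you need does hold.
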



\section{Conclusion}
\label{sec:5}

We investigated Darboux and Riemann interval delta integrals
for interval-valued functions on time scales. Inequalities
for interval-valued functions were proved. Our results generalize
previous inequalities presented by Costa 
\cite[Corollary 3.3, Theorem~3.4, Theorem~3.5]{C17} 
and Rom\'{a}n-Flores \cite[Theorem~4]{R16}.


\begin{acknowledgements}
This study was funded by Fundamental Research Funds
for the Central Universities (Grant Numbers 2017B19714 and 2017B07414).
Torres was supported by FCT and CIDMA,
project UID/MAT/04106/2013.

The authors are very grateful to two anonymous referees,
for several valuable and helpful comments, suggestions
and questions, which helped them to improve the paper
into present form.

\bigskip

\noindent{\bf Compliance with ethical standards}

\medskip

\noindent {\bf Conflicts of interest} 
The authors declare that they have no conflict of interest.

\medskip

\noindent {\bf Ethical approval} 
This article does not contain any studies with human
participants or animals performed by any of the authors.
\end{acknowledgements}



\end{document}